\documentclass[a4paper,12pt]{article}
\usepackage[T1]{fontenc}
\usepackage[english]{babel}
\usepackage{mathrsfs}

\usepackage{nonfloat}
\usepackage{amsmath}
\usepackage{amsthm}
\usepackage{amsfonts}
\usepackage{amssymb}
\usepackage{makeidx}
\usepackage[T1]{fontenc}
\usepackage[utf8]{inputenc}
\usepackage{graphicx}
\usepackage{hyperref}
\usepackage{subfigure}
\usepackage[nowrite]{frontespizio}
\usepackage{mathrsfs}
\usepackage{fancyhdr}
\usepackage{setspace}
\usepackage{verbatim}
\usepackage{mathtools}
\mathtoolsset{showonlyrefs}

\usepackage{pgfplots,pgfplotstable, relsize}
\usepgflibrary{arrows}
\usepackage{float}
\usepackage{pgf,tikz}
\usetikzlibrary{arrows}
\usepackage{caption}
\usepackage{color}

\usepackage[left=1.5cm,right=1.5cm,top=1.5cm,bottom=1.5cm]{geometry} 
\linespread{1.2}

\tolerance 1414
\hbadness 1414
\emergencystretch 1.5em
\hfuzz 0.3pt
\widowpenalty=10000

\newcommand{\dis}{\displaystyle}

\newcommand{\llav}[1]{  \left\{#1\right\} }
\newcommand{\pic}[1]{  \left\langle #1\right\rangle }

\newcommand{\norm}[1]{  \left\|#1\right\| }
\newcommand{\pare}[1]{\left(#1\right)}
\newcommand{\corch}[1]{  \left[#1\right] }
\newcommand{\abs}[1]{  \left|#1\right| }
\newcommand{\CAL}[1]{\mathcal{#1}}
\newcommand{\BB}[1]{\mathbb{#1}}

\newcommand{\ds}[1]{\displaystyle{#1}}
\def\mathcolor#1#{\@mathcolor{#1}}
\def\@mathcolor#1#2#3{%
  \protect\leavevmode
  \begingroup
    \color#1{#2}#3%
  \endgroup
}

\newcommand{\blanco}[1]{\mathcolor{white}{#1}}
\def\L^*{\CAL L_{T, x^*}^{\text{max}}}

\newtheorem {theorem}{Theorem}[section]
\newtheorem{lemma}{Lemma}[section]

\newtheorem{corollario}{Corollary}[section]
\setlength{\headheight}{15pt}

\newtheoremstyle{mythm}%
{3pt}
{3pt}
{}
{}
{\bfseries}
{}
{.5em}
{}%

\theoremstyle{mythm}
 \newtheorem{remarker}{Remark}[section]



\title{Critical asymptotic behaviour in the SIR model}
\author{Monia Capanna\thanks{Universit\`a degli Studi dell'Aquila, Via Vetoio, 67100 L'Aquila, Italy. Email:\,{\tt monia.capanna@graduate.univaq.it}}}
\date{}
\begin{document}

\sloppy
\maketitle
 
\begin{abstract}
\noindent 
This article is devoted to the analysis of a particle system model for epidemics among a finite population with susceptible, infective and removed individuals (SIR). The infection mechanism depends on the relative distance between susceptibles and infected so that an infected individual is more likely to infect nearby sites than those further away. For fixed time, we prove that the density fields weakly converge to the solution of a PDE's system, as the number of particles increases. We find an implicit expression for the final survivor density of the limit equation and we analyze the asymptotics of the microscopic system, by taking first the time and after the number of particles to infinity,  showing a critical behaviour for some values of the parameters when the system is set in the mean field regime.

\bigskip
\noindent \textbf{Keywords:}  Infection model. Interacting particle system. Hydrodynamic limit. Asymptotic analysis.

\bigskip
\noindent \textbf{AMS Subject Classification Numbers:}  60B10 60F99 60J27

\end{abstract}
\section{{\bf Introduction}}
        \label{sec:1}

In \cite{wkermack27}, Kermack and McKendrick proposed the SIR model with the purpose to analyse the spread of an infection. This simple model is formulated for a population being divided into three parts, the susceptible class S, consisting of healthy people that might be infected, the infective class I, with infected individuals that can spread the disease to susceptibles,  and the recovered class R, in which there are people that had been infectious but can no longer spread or catch the disease. The general model is described by the following system of ODE's
\begin{align}\label{p85}
\begin{cases}
\frac{d}{dt}S=-\lambda SI\\
\frac{d}{dt}I=\lambda SI-\mu I\\
\frac{d}{dt}R=\mu I 
\end{cases}
\end{align}
where $\lambda$ and $\mu$ are positive constants and represent the contagious rate and  removal rate respectively. Since then several stochastic versions of the SIR model have been considered. 
In \cite{K70}, T. G. Kurtz shows that, in the mean field regime (i.e. under the assumption that each individual affects all the others with the same intensity), system \eqref{p85} arises as the limit in probability of a large population stochastic epidemic model.
%
In \cite{lalley2009} and \cite{penrose1996}, instead, the authors consider spatial variants of the stochastic SIR model, in which infectious contacts are limited to individuals that are within a fixed range from the infected one.

\vspace{+10pt}

In this article we consider a model where infected individuals have a greatest influence on the healthy ones spatially closest to them. The microscopic scenario consists of an interacting particle system, in the discrete $d$-dimensional torus, in which the possible states for the sites are $0$=susceptible, $1$=infected and $-1$=removed. Each infected site recovers with constant rate and infects the susceptible particles with a rate that depends on the relative position of infective and susceptible in such a way that nearby sites are infected with a greater intensity than those further away. We show that the model converges, in the hydrodynamic limit, to a system of PDE's which describes the evolution, in space and time, of the  macroscopic densities, $u_0(r,t)$ and $u_1\pare{r,t}$, of susceptibles and infected respectively, where $\pare{r, t}\in \mathbb T^d\times \mathbb R^+$ and $\BB T^d$ is the $d$-dimensional torus. 
Denoting by $\dis{\rho_\infty(r):=\lim_{t\to +\infty}u_0\pare{r,t}}$ and by $\rho_0(r):=u_0\pare{r,0}$ the final and
respectively initial densities of susceptibles, and by  $\rho_1\pare{r}:=u_1\pare{r, 0}$ the initial density of infected, we prove that
\begin{equation}
    \label{1.1}
\rho_\infty\pare{r}= \rho_0(r) e^{- I(r)}
     \end{equation}
where
      \begin{equation}
    \label{1.2}
I(r)= \beta\int_{\BB T^d} J\pare{r, r'}\pare{\rho_0\pare{r'}+\rho_1\pare{r'}-\rho_\infty\pare{r'}}dr'
     \end{equation}
and $ \beta J(r,r')\ge 0$ describes the infection intensity at $r$ due
to an infective individual at $r'$.
 Notice that \eqref{1.1} is not an explicit formula for the
survivors fraction $\rho_\infty\pare{r}$, as its r.h.s.\ depends on $\rho_\infty(\cdot)$ as
well. It is anyway interesting to notice that, when an external agent infects instantaneously an healthy population (i.e. $\rho_0\pare{r}+\rho_1\pare{r}\equiv 1$), the knowledge of the parameters of
the infection, $J(r, r')$ and $\beta$, allows to deduce
location and density of the initial infectors from the statistics
of the final survivors.


The macroscopic behaviour gives a good description of the microscopic one 
only for bounded range times. For this reason the hydrodynamic limit is not appropriate if we want to investigate the long time behaviour of the microscopic system.  

The main result of the paper is  given by Theorem \ref{051} in which, taking first the time and after the number of particles to infinity, we investigate the behaviour of the final microscopic density of population ever infected. The theorem is set in the mean field regime. This assumption reduces the complexity of our problem by removing the spatial dependence in the infection rate. In the theorem we show that there exists a critical value of $\beta$, given by $\beta_c=1$, below which, starting with an infinitesimal fraction of infected, the infection remains  confined. If
$\beta>1$, under further conditions on the initial data,  it spreads all over the space and the final density of survivors approaches, as the population increases, a limit value which differs significantly from the result of the hydrodynamic limit. In this case, indeed, the infection has enough time to propagate and the order of the limits in time and number of particles cannot be switched.

\section{Model definition and statements of the results}
Let $\BB T^d$ be the torus in $d$ dimensions and $\gamma^{-1}$ a positive integer. We denote by $\BB T_\gamma^d:=\pare{\gamma^{-1}\BB T^d}\cap\mathbb{Z}^d$ the microscopic version of the discrete $d-$dimensional torus. Our model is an interacting particle system in $\CAL{S}_\gamma:=\{-1,0,1\}^{\BB T_\gamma^d}$. Before describing the interaction between the particles we need to introduce a constant $\beta>0$ and a smooth function $J\in L^\infty\pare{\BB T^d\times \BB T^d; \BB R_+}$. Since the aim is to deal with a spatially homogeneous model, it is natural to assume $J(r,r')$ a symmetric function such that $J(r',r)= J(0,r'-r)$ and $\dis{\int_{\BB T^d} J(r',r) dr'}=1$ do not depend on $r$.

Let $\eta^\gamma\in \CAL S_\gamma$ be a configuration and $x$ a site in $\BB T_\gamma^d$. From now on $\eta^\gamma(x)$ denotes the value of the configuration $\eta^\gamma$ in $x$. $\eta^\gamma (x)$ can switch from $0$ to $1$ with rate $\gamma^d \beta\sum_{y\in \BB T_\gamma^d}\mathbb{I}_{\{\eta^\gamma(y)=1\}}J\pare{\gamma x,\gamma y}$, or from $1$ to $-1$ with rate $1$.
For every $\gamma>0$, and for every initial configuration $\eta^\gamma_0\in\CAL S_\gamma$, the evolution $\llav{\eta^\gamma_t}_{t\geq0}$ is a Markov process, defined in an abstract probability space $\pare{\Omega, \mathscr F, P}$. The state space of the process is $\CAL S_\gamma$ and the generator $L_\gamma$ is defined on functions $f:\CAL S_\gamma\to\mathbb{R}$ by
\begin{displaymath}
\big(L_\gamma f\big)(\eta^\gamma)=\sum_{x\in \BB T_\gamma^d}\corch{\pare{\gamma^d\beta\sum_{y\in \BB T_{\gamma}^d} J\pare{\gamma x,\gamma y}\mathbb{I}_{\{\eta^\gamma(y)=1\}}}\mathbb{I}_ {\{\eta^\gamma(x)=0\}}+\mathbb{I}_ {\{\eta^\gamma(x)=1\}}}\pare{f\pare{\eta^{\gamma, x}}-f\pare{\eta^\gamma}},
\end{displaymath}
where $\eta^{\gamma, x}$ denotes the configuration in $\CAL S_\gamma$ such that
\begin{displaymath}
\begin{split}
\eta^{\gamma, x}(y)=
\begin{cases}
1&\text{if $y=x$ and $\eta^\gamma(x)=0$},\\
-1&\text{if $y=x$ and $\eta^\gamma(x)=1$},\\
\eta^\gamma(y)&\text{otherwise}.
\end{cases}
\end{split}
\end{displaymath}
Such Markov process can be interpreted as a model for the spread of a disease. An individual on a site $x$ is healthy if $\eta^\gamma(x)=0$, infected if $\eta^\gamma(x)=1$, recovered if $\eta^\gamma(x)=-1$. Infected individuals recover with constant rate equal to 1 and are no longer able to catch the disease, while an healthy individual in $x$ becomes infected at a rate which depends on the number of sick individuals present in the system and also on their distance from $x$. 
Assuming that 
\begin{align}\label{phd}
J(0, r)\leq J(0,r')
\end{align}
for all $r, r'\in\BB R^d$ such that $\abs{r'}\leq\abs{r}$, we have the following interpretation: the more an healthy individual is close to an infected one the more he is likely to become sick.

 In order to study the hydrodynamic limit of the system, we consider the time evolution of the empirical measures $\pi_t^{\gamma, i}$  associated to the particle system:
\begin{equation}\label{0}
\begin{split}
&\pi_t^{\gamma, i}(dr)=\pi^{\gamma, i}\pare{\eta^\gamma_t, dr}=\gamma^d\sum_{x\in \BB T_\gamma^d}\mathbb{I}_{\llav{\eta^\gamma_t(x)=i}}\delta_{\gamma x}\pare{dr}\qquad i=-1,0,1
\end{split}
\end{equation}
where $\delta_{\gamma x}\pare{dr}$ is the Dirac measure on $\BB T^d$ centered in $\gamma x$. For every measure $\pi$ on the torus and for every $G\in C\pare{\BB T^d, \BB R}$ ($C\pare{\BB T^d, \BB R}$ is the set of all the continuous functions $G:\BB T^d\to\BB R$), we denote by $\pic{\pi,G}:=\int_{\BB T^d} Gd\pi$ the integral of $G$ with respect to $\pi$. Notice that
\begin{align}\nonumber
\pic{\pi_t^{\gamma, i}, G}=\gamma^d\sum_{x\in \BB T_\gamma^d}\BB{I}_{\llav{\eta^\gamma_t(x)=i}}G\pare{\gamma x}
\end{align}
is just a function of the configuration $\eta^\gamma_t$.
Since $\pi_t^{\gamma, -1}(dr)+\pi_t^{\gamma, 0}(dr)+\pi_t^{\gamma, 1}(dr)\equiv 1$, we will analyze only the evolution of $\pare{\pi_t^{\gamma, 0}(dr), \pi_t^{\gamma, 1}(dr)}_{t\geq 0}$ whose hydrodynamic behaviour is described by the following theorem. From now on we will denote by $\xrightarrow[]{P}$ the convergence in probability.
\begin{theorem}\label{teo}
Let $\rho_0\in C\pare{\BB T^d, [0,1]}$ and $\rho_1\in C\pare{\BB T^d, [0,1]}$ such that $\dis{\sup_{r\in\BB T^d}\llav{\rho_0\pare{r}+\rho_1\pare{r}}\leq 1}$. Suppose that 
\begin{align}
\limsup_{\gamma\to 0}P\pare{\abs{\pic{\pi_0^{\gamma, i}, G}-\int_{\mathbb{T}^d}\rho_i(r)G\pare{r}dr}>\varepsilon}=0,
\end{align}
for all $\varepsilon>0$,  $i\in\llav{0,1}$ and $G\in C\pare{\BB T^d, \BB R}$. Then, for every $T>0$,
\begin{equation}\label{anna}
\sup_{t\in[0,T]}\abs{\pic{\pi_t^{\gamma,i}, G}-\int_{\BB T^d}u_i\pare{t,r}G\pare{r}dr}\xrightarrow[\gamma\to 0]{P}0,
\end{equation}
where $u_0\pare{t,r}$ and $u_1\pare{t,r}$ are the solutions of 
\begin{equation}\label{2}
\begin{cases}
\frac{\partial}{\partial t} u_0(t,r)=-\beta  J* u_1(t,r) u_0(t,r)\\
\frac{\partial}{\partial t}u_1(t,r)=\beta J*u_1(t,r)u_0(t,r)-u_1(t,r)\\
u_0(0,r)=\rho_0(r), u_1(0,r)=\rho_1(r).
\end{cases}
\end{equation}
\end{theorem}
The following theorem guarantees the existence and uniqueness of the solution of the latter system and describes the behaviour of the hydrodynamic limit for large times. 
\begin{theorem}\label{secondo}
There exists a unique solution $\pare{u_0\pare{t,r}, u_1\pare{t,r}}$ of system \eqref{2} in $[0, +\infty)\times\BB T^d$. Such solution has the following properties:
\begin{itemize}
\item[(a)] $u_0\pare{t,r}$ is decreasing in $t$, moreover $0\leq u_0\pare{t,r}\leq 1$ and $0\leq u_1\pare{t,r}\leq 1-u_0\pare{t,r}$,
\item[(b)]$\dis{\lim_{t\to +\infty}\pare{u_0\pare{t,r}, u_1\pare{t,r}}=\pare{\rho\pare{r}, 0}}$, where $\rho\pare{r}$ satisfies
\begin{align}\label{seii}
\rho\pare{r}=\rho_0\pare{r}e^{-\beta J*\pare{\rho_0\pare{r}+\rho_1\pare{r}-\rho\pare{r}}}.
\end{align}
\end{itemize}
\end{theorem}

Theorem \ref{secondo} implies that  $u_0\pare{t,r}\geq \rho\pare{r}\geq \rho_0\pare{r}e^{-\beta\pare{\rho_0\pare{r}+\rho_1\pare{r}}}$ for all $t\in [0,+\infty)$.

Observe that, considering the case in which, at time $0$, an external agent infects instantaneously a fraction of an healthy population (i.e. $\rho_0\pare{r}+\rho_1\pare{r}=1$), relation \eqref{seii} allows to deduce location and density of the initial infectors from the statistics of the final survivors once that the parameters of the infection, $\beta$ and $J\pare{r,r'}$, are known.
Viceversa, if we know $J(r,r')$ and that some region was initially
free from the infection, then we can compute the strength of the
disease $\beta$ which is equal, for $r$ in such a region, to
$\dis{-\big(\int_{\BB T^d} J(r',r)[1-\rho(r')]dr'\big)^{-1}\log \rho(r)} $.

Considering now the particular case in which $J\pare{0,r}\equiv 1$ we reduce the interaction of our system in the mean field regime. Calling 
\begin{displaymath}
x^\gamma(t):=\pic{\pi_t^{\gamma,0}, 1}\qquad y^\gamma(t):=\pic{\pi_t^{\gamma,1},1}
\end{displaymath}
the stochastic averages of healthy and sick individuals respectively, by Theorem \ref{teo} and Theorem \ref{secondo}, we recover the following corollary which describes the hydrodynamic behaviour for the mean field system. Such result is known in literature and has been proved by Kurtz in \cite{K70}.
\begin{corollario}\label{teoo}
Let $\rho_0, \rho_1\in [0,1]$ such that $\rho_0+\rho_1\leq 1$ and suppose that 
\begin{align}\nonumber
\limsup_{\gamma\to 0}P\pare{\abs{x^\gamma(0)-\rho_0}>\varepsilon}=0,\quad \limsup_{\gamma\to 0}P\pare{\abs{y^\gamma(0)-\rho_1}>\varepsilon}=0,
\end{align}
for all $\varepsilon>0$. Then, for every $T>0$,
\begin{displaymath}
\sup_{t\in[0,T]}\abs{x^\gamma(t)-x(t)}\xrightarrow[\gamma\to 0]{P}0,\quad\sup_{t\in[0,T]}\abs{y^\gamma(t)-y(t)}\xrightarrow[\gamma\to 0]{P}0,
\end{displaymath}
 where $\pare{x\pare{t}, y\pare{t}}$ is the unique solution of the system

\begin{equation}\label{222}
\begin{cases}
\frac{d}{dt}x(t)=-\beta x(t)y(t)\\
\frac{d}{dt}y(t)=\beta x(t)y(t)-y(t)\\
x(0)=\rho_0,\; y(0)=\rho_1.
\end{cases}
\end{equation}
In addition $\dis{\lim_{t\to +\infty}\pare{x\pare{t}, y\pare{t}}=\pare{x_\infty,0}}$, 
where $x_\infty$ is defined as 

\begin{equation}\label{lop}
x_\infty=
\begin{cases}
\rho_0\quad\text{if }\;\rho_1=0,\\
0\quad\text{ if }\;\rho_0=0,\\
\bar x_\infty \;\; \text{if } \;\rho_1\in (0,1)\text{ and } \rho_0\neq 0,
\end{cases}
\end{equation}
with $\bar x_\infty$ the smallest solution of 
\begin{equation}\label{lk}
x=\rho_0 e^{-\beta\pare{\rho_0+\rho_1- x}}.
\end{equation}
\end{corollario}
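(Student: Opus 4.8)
The plan is to derive the Corollary by specializing Theorems \ref{teo} and \ref{secondo} to the homogeneous regime $J\equiv 1$, and then analyzing the scalar fixed point equation \eqref{lk}.

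First I would reduce the PDE system to the ODE system. When $J\pare{0,r}\equiv 1$ and $J$ is normalized, $J*u_1\pare{t,r}=\int_{\BB T^d}u_1\pare{t,r'}dr'$ is independent of $r$, so a pair that is constant in space solves \eqref{2} with constant data $\rho_0,\rho_1$ exactly when it solves \eqref{222}. By the uniqueness part of Theorem \ref{secondo}, the solution of \eqref{2} with constant initial data is constant in $r$, hence coincides with the solution $\pare{x\pare{t},y\pare{t}}$ of \eqref{222}; in particular $\int_{\BB T^d}u_0\pare{t,r}dr=x\pare{t}$ and $\int_{\BB T^d}u_1\pare{t,r}dr=y\pare{t}$, and uniqueness for \eqref{222} is inherited (its right-hand side is in any case locally Lipschitz).

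Next I would address the convergence. The feature that makes the weaker, scalar hypothesis of the Corollary sufficient is that with $J\equiv 1$ the infection rate at every healthy site equals the common value $\beta y^\gamma\pare{t}$, so that $\pare{x^\gamma\pare{t},y^\gamma\pare{t}}$ is itself a Markov jump process: infection occurs at rate $\gamma^{-d}\beta x^\gamma y^\gamma$ sending $\pare{x,y}\mapsto\pare{x-\gamma^d,y+\gamma^d}$, and recovery at rate $\gamma^{-d}y^\gamma$ sending $\pare{x,y}\mapsto\pare{x,y-\gamma^d}$. Since the drift of this process closes on the pair $\pare{x^\gamma,y^\gamma}$, the martingale estimate and Gronwall argument underlying Theorem \ref{teo} can be run using only the scalar initial data assumed here, yielding the two uniform limits toward $x\pare{t}$ and $y\pare{t}$; this is precisely Kurtz's theorem \cite{K70}.

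Finally, the long time behaviour is where the genuine work lies. Applying Theorem \ref{secondo}(b) with constant data and $J*\rho=\rho$ gives $\pare{x\pare{t},y\pare{t}}\to\pare{x_\infty,0}$ with $x_\infty$ solving \eqref{lk}. The degenerate cases are immediate: if $\rho_1=0$ then $y\pare{t}\equiv 0$ and $x\pare{t}\equiv\rho_0$, while if $\rho_0=0$ then $x\pare{t}\equiv 0$. The substantive case $\rho_1\in\pare{0,1}$, $\rho_0\neq 0$, together with the characterization of $x_\infty$ as the \emph{smallest} positive root, is the main obstacle. I would first extract from \eqref{222} the conservation law $\frac{d}{dt}\log x=\beta\frac{d}{dt}\pare{x+y}$, integrate it to obtain $x\pare{t}=\rho_0 e^{\beta\corch{x\pare{t}+y\pare{t}-\pare{\rho_0+\rho_1}}}$, and let $t\to\infty$. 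Writing $g\pare{x}:=\rho_0 e^{-\beta\pare{\rho_0+\rho_1-x}}$, one checks that $g$ is convex and increasing with $g\pare{0}=\rho_0 e^{-\beta\pare{\rho_0+\rho_1}}>0$ and $g\pare{\rho_0}=\rho_0 e^{-\beta\rho_1}<\rho_0$; since $g\pare{x}-x$ is convex and changes sign on $\corch{0,\rho_0}$, it has exactly one root in $\pare{0,\rho_0}$, any further root lying beyond $\rho_0$. Because the conservation law forces $x\pare{t}\leq\rho_0$ and $x\pare{t}$ is decreasing, its limit must be this root, which is therefore the smallest positive solution $\bar x_\infty$ of \eqref{lk}, giving \eqref{lop}.
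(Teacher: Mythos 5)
Your proposal is correct, and its overall architecture is the paper's own: specialize Theorems \ref{teo} and \ref{secondo} to $J\equiv 1$, dispose of the degenerate cases $\rho_1=0$ and $\rho_0=0$ by solving \eqref{222} explicitly, and identify $x_\infty$ through the fixed point equation \eqref{lk} together with the observation that \eqref{lk} admits a unique root in $\pare{0,\rho_0}$ --- which is exactly the paper's closing remark; your convexity argument for $g\pare{x}-x$ is a spelled-out version of that observation, and your conservation law $\frac{d}{dt}\log x=\beta\frac{d}{dt}\pare{x+y}$ is the same relation the paper records as \eqref{445} in Remark \ref{rem1}. The one point where you genuinely depart from (and improve on) the paper is the convergence step. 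The corollary assumes only convergence of the scalar initial data $x^\gamma\pare{0},y^\gamma\pare{0}$, i.e.\ of $\pic{\pi_0^{\gamma,i},G}$ for $G\equiv 1$, which is formally weaker than the hypothesis of Theorem \ref{teo}, required there for every $G\in C\pare{\BB T^d}$; the paper nevertheless invokes Theorem \ref{teo} directly, deferring to Kurtz \cite{K70}. Your observation that for $J\equiv 1$ the pair $\pare{x^\gamma\pare{t},y^\gamma\pare{t}}$ is itself an autonomous Markov jump process (a density-dependent chain with jump rates $\gamma^{-d}\beta x^\gamma y^\gamma$ and $\gamma^{-d}y^\gamma$), so that the martingale--Gronwall argument closes on the pair and needs only the scalar hypothesis, is precisely what is required to justify the corollary as literally stated. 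In short: same route as the paper, with one loose joint in its derivation properly tightened.
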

Regimes given by \eqref{lop} follow trivially if $\rho_1=0$ or $\rho_0=0$, because the solution of \eqref{222} is given by $\pare{\rho_0,0}$ or $\pare{0, e^{-t}}$, which implies $x_\infty$ equal to $\rho_0$ or $0$ respectively. In the less trivial case in which $\rho_1\in\pare{0,1}$ and $\rho_0\neq 0$, equation \eqref{lk} follows from Theorem \ref{secondo} once we observe that there exists a unique solution of \eqref{lk} in the interval $(0,\rho_0)$.
\smallskip

We are now interested in the behaviour of the process taking first the time and after the number of particles to infinity. We will do it in the mean field regime, as the general case is more complicated due to the spatial dependence in the infection rate. Using the monotonicity of the stochastic densities of healthy and recovered individuals it is possible to prove that, when $\gamma$ is fixed, $x^\gamma(t)$ and $y^\gamma(t)$ converge almost surely, as the time increases, to a random value $x^\gamma(\infty)$ and $0$ respectively.

The following results are  focused on the analysis of the final value of the survivors fraction $x^\gamma(\infty)$ when $\gamma$ approaches $0$. 

As it is shown by Theorem \ref{051} below, $\beta=1$ is a critical value for the system, because its behaviour significantly changes if the infection parameter $\beta$ is below or above $1$. The case $\beta<1$ is totally analysed in Theorem \ref{051}. The case $\beta>1$ is more delicate when the system starts with an infinitesimal density of infected and requires an intermediate step.
\vspace{+10pt}
Theorem \ref{051} shows that, when $\beta>1$ and hypothesis 
\vspace{-5pt}
\begin{align}
\mathscr{H}:\; x^\gamma\pare{0}=1-\gamma^{\alpha}, y^\gamma\pare{0}=\gamma^{\alpha}, \alpha\in\pare{0,\frac{d}{2}}
\end{align}
\vspace{-5pt}
is satisfied, $x^\gamma\pare{\infty}$ approaches, as $\gamma\to 0$, a value $\hat x_\infty:=\hat x_\infty\pare{\beta}$ which is the smaller of solution of 
\begin{align}\label{plmj}
x=e^{\beta\pare{x-1}}.
\end{align}
Observe that, when $\beta>1$, equation \eqref{plmj} admits two solutions, $1$ and $\hat x_\infty$ which is smaller than $\frac{1}{\beta}$. This follows from the fact that the function $g\pare{x}:=x-e^{\beta\pare{x-1}}$ is such that $g\pare{0}<0$, $g\pare{1}=0$ and admits a unique maximum value in $1-\frac{1}{\beta}\log\beta<\frac{1}{\beta}$. 
\vspace{+10pt}

In the next theorem we show that, under the hypothesis $\mathscr{H}$ and $\beta>1$, the density of susceptibles reaches every value in the interval $\dis{\pare{\hat x_\infty, 1}}$ at a random time at which it is possible to control also the fraction of infected. In other words a small perturbation of the equilibrium point $\pare{1,0}$ makes the system escape from it.

Before stating the theorem we need to observe that $x^\gamma\pare{t}$ and $y^\gamma\pare{t}$ take values in the set $\llav{\gamma, 2\gamma, \dots, 1}$, for this reason, for every $x\in [0,1]$, we define $\dis{[x]_\gamma:=\max_{n=1, \dots, \gamma^{-1}}\llav{\gamma n : \gamma n\leq x}}$ and the stopping time  
\begin{align}
S_{\gamma, x}:=\inf \llav{t\geq 0: x^\gamma\pare{t}=[x]_\gamma},
\end{align}
with the usual convention that the infimum of the empty set is $+\infty$. The following Theorem holds.
\begin{theorem}\label{balu}
Suppose  $\mathscr H$ is satisfied and $\beta>1$. Let $x^*\in\pare{\hat x_\infty, 1}$, then
\begin{align}\label{nahg}
\lim_{\gamma\to 0}\BB P\pare{S_{\gamma, x^*}<+\infty}=1.
\end{align}
Moreover, calling $y^*:=y^*\pare{x^*}=1-x^*+\frac{1}{\beta}\log\pare{x^*}$, we have that, 
\begin{align}\label{nahgg}
\lim_{\gamma\to 0}\mathbb P\pare{\abs{ y^\gamma\pare{S_{\gamma, x^*}}-y^*}> \varepsilon}=0,
\end{align}
for every $\varepsilon>0$.
\end{theorem}

The following result gives an estimate of the stopping time $S_{\gamma, x^*}$. We prove that it is of order $\log \gamma^{-1}$ and it is contained in a small deterministic interval, with probability converging to $1$.

\begin{theorem}\label{004}
Suppose $\mathscr H$ holds and  $\beta>1$. For all $x^*\in (\hat x_\infty, 1)$ there exists $T_{x^*}>0$ such that, for every $T>T_{x^*}$,
\begin{align}\label{jaja}
\lim_{\gamma\to 0}\BB P\pare{S_{\gamma, x^*}\in [t_c-T+\CAL L_{T, x^*}^{\text{min}}, t_c-T+\CAL L_{T, x^*}^{\text{max}}]}=1,
\end{align}
where $t_c=\frac{\alpha}{\beta-1}\log\gamma^{-1}$, while $\CAL L_{T, x^*}^{\text{min}}$ and $\CAL L_{T, x^*}^{\text{max}}$ are positive  deterministic times, depending on $T$ and $x^*$, such that 
\begin{align}\label{polka}
\dis{\lim_{T\to +\infty}\CAL L_{T, x^*}^{\text{max}}-\CAL L_{T, x^*}^{\text{min}}=0}.
\end{align}
\end{theorem}
In the next theorem we finally prove that, when $\beta< 1$ or when the system starts with a non infinitesimal density of infected, the limiting behaviour of the system, taking first the time and after the number of particles to infinity, coincides with the long time behaviour of the hydrodynamic limit, in other words it is possible to exchange the order of the limits in $t$ and $\gamma$. 
Things are different when $\beta>1$, indeed, under condition $\mathscr{H}$, the infection spreads all over the space and $x^\gamma (\infty)$ approaches a limit value $\hat x_\infty$ much smaller than $1$, so in this case the result significantly differs from the one predicted by the hydrodynamic limit.
\begin{theorem}\label{051}
Suppose that hypothesis of corollary \ref{teoo} hold. 
\begin{itemize}
\item[(a)]  If 
\begin{align}
\dis{(1)\; \rho_1=0 \text{  and  } \beta<1\qquad \text{ or }\qquad (2)\;\rho_1\in (0,1)},
\end{align}
then, for all $\varepsilon>0$, 
\vspace{-10pt}
\begin{align}
\lim_{\gamma\to 0}\BB P\pare{\abs{x^{\gamma}(\infty)-x_\infty}>\varepsilon}=0,
\end{align}
where $x_\infty$ is defined as in \eqref{lop}.
\item[(b)] If $\beta>1$ and $\mathscr{H}$ holds, then for all $\varepsilon>0$
\begin{align}
\lim_{\gamma\to 0}\BB P\pare{\abs{x^{\gamma}(\infty)-\hat x_\infty}>\varepsilon}=0,
\end{align}
where $\hat x_\infty$ is the smaller of solution of \eqref{plmj}. 
\end{itemize}
\end{theorem}

Despite it is a known result in literature (see \cite{UZS13} for instance), in the following remarks, we briefly analyze the qualitative behaviour of  the solution of the hydrodynamic limit in the mean field regime; some results will be useful in the sequel.
\begin{remarker}\label{rem1}
Let $\pare{x\pare{t}, y\pare{t}}_{t\geq 0}$ be the solution of \eqref{222} and consider the non trivial case in which $\rho_1\in(0,1)$ and $\rho_0\neq 0$.  By Theorem \ref{secondo} we know that $x\pare{t}$ is decreasing and, for all $t\geq 0$, $\rho_0e^{-\beta\pare{\rho_0+\rho_1}}\leq x\pare{t}\leq \rho_0$ while $0\leq y\pare{t}\leq 1-x\pare{t}$. Since 
\begin{displaymath}
\frac{dy}{dx}=-1+\frac{1}{\beta x},
\end{displaymath}
we have that
\begin{equation}\label{445}
y(x)=-x+\frac{1}{\beta}\log x+\rho_0+\rho_1-\frac{1}{\beta}\log\rho_0.
\end{equation}
To analyze the behaviour of $y(t)$ we need to distinguish two cases. If $\rho_0\leq\frac{1}{\beta}$,
$$\frac{d}{dt}y(t)\bigg|_{t=0}=(\beta\rho_0-1)\rho_1\leq 0.$$
As $x(t)\leq\rho_0$, for $t\geq 0$, we can conclude that $y(t)$ is decreasing. If  $\rho_0>\frac{1}{\beta}$, studying the sign of derivative of the function $y\pare{t}$ and using that $x\pare{t}$ is decreasing, it is possible to prove the existence of a time $t^\text{M}>0$ up to which $y(t)$ increases and after decreases. As $t^\text{M}$ is such that  $x(t^\text{M})=\frac{1}{\beta}$,  by \eqref{445} we get that the maximum value achievable by the function $y$ is
\begin{displaymath}
\begin{split}
y(t^\text{M})=\rho_0+\rho_1-\frac{1}{\beta}-\frac{1}{\beta}\log(\beta\rho_0).
\end{split}
\end{displaymath}
By Corollary \ref{teoo} $\big(x(t), y(t)\big)$ converges to the equilibrium point $\pare{x_\infty, 0}$, where $x_{\infty}$ is defined in \eqref{lk} and is strictly less than $\min\pare{\frac{1}{\beta}, \rho_0}$.
\end{remarker}
\begin{remarker}\label{bgf}
Observe that, from \eqref{lk}, it is possible to analyze the relation between the parameters of the model in the particular case in which $\rho_1=1-\rho_0$. In the sequel remember that $x_\infty$ is always contained in the interval $\pare{0, \min\pare{\rho_0, \frac{1}{\beta}}}$. The evolution of $\rho_0$ respect to $x_\infty$, when $\beta$ fixed, is given by
\begin{displaymath}
\rho_0(x_\infty)=x_\infty e^{\beta\pare{1- x_\infty}}.
\end{displaymath}
Calling $x_\infty^\text{M}$ the first positive solution of \eqref{6kiu} in  the variable $x$
\begin{align}\label{6kiu}
1=xe^{\beta\pare{1-x}},
\end{align}
we have that the function $\rho_0\pare{x_\infty}$ is defined for $x_\infty\in \pare{0, x_\infty^\text{M}}$ and that $x_\infty^\text{M}=1$ if $\beta\leq1$, while $x_\infty^\text{M}<1$ if $\beta> 1$.
The function $\rho_0(x_\infty)$ increases from $0$ to $1$.
The relation between $\beta$ and $x_\infty$ when $\rho_0$ is fixed, is 
$$\beta(x_\infty)=\frac{\log x_\infty-\log \rho_0}{x_\infty-1}.$$
The function $\beta(x_\infty)$ is defined for $x_\infty\in \pare{0, \rho_0}$ and decreases from $+\infty$ to $0$.
Finally the relation between $\rho_0$ and $\beta$ when $x_\infty$ is fixed is
$$\rho_0(\beta)=x_\infty e^{ \beta\pare{1-x_\infty}},$$
the function is well defined for $\beta\in\pare{0, \frac{\log x_\infty}{x_\infty-1}}$ and increases from $x_\infty$ to $1$. All the previous functions are  invertible, so it is possible to deduce also the behaviour of their inverses. 
\end{remarker}

The rest of the paper is organized as follows.
In section \ref{sez2} we prove the hydrodynamic limit result stated by Theorem \ref{teo}. Section \ref{sez3} is devoted to the proof of Theorem \ref{secondo}. Finally, in section \ref{cinque} we prove the result about the long time behaviour of the system in the mean field regime introduced by Theorems \ref{balu}, \ref{004} and \ref{051}.

\section{Proof of Theorem \ref{teo}}\label{sez2}
Let $\CAL M_1^+$ be the space of positive measures on $\BB T^d$ with mass bounded by $1$, endowed with the weak topology and the distance $d_{\CAL M_1^+}$ defined in the Appendix.  Fix $T>0$  and define $\mathcal{D} := D\pare{[0,T], \mathcal{M}_1^+}$, the space of right continuous functions with left limits taking values in $\mathcal M_1^+$; we endow $\CAL D$ with the modified Skorohod metric $d_{\text{SK}}$ (see the Appendix for details). From now on we use the capital letter $\Pi$ to denote a process in $\mathcal D$ and the underlined letter $\underline\Pi$ to denote a vector in $\mathcal D^2$.                Call $\underline\Pi^\gamma=\pare{\Pi^{\gamma, 0}, \Pi^{\gamma, 1}}=\pare{\pi_t^{\gamma, 0}, \pi_t^{\gamma, 1}}_{t\in[0,T]}$ the process introduced in \eqref{0}. Let $i\in\llav{0,1}$, since there are jumps the process $\Pi^{\gamma, i}\in\mathcal D$.
The process $\underline\Pi^\gamma$ takes values in the space $\CAL D^2$, which is endowed with the product metric $d$ (see the Appendix for details).

Consider the sequence of probability measures $\llav{P^\gamma}_{\gamma>0}$ on $\CAL{D}^2$ corresponding to the Markov process $\underline\Pi^\gamma$. Following the same steps of \cite{KL}, chapter 4, the proof consists in showing the convergence of $\llav{P^\gamma}_\gamma$ to the Dirac measure concentrated on a deterministic path $\underline\Pi^*=\pare{\pi_t^{*,0},\pi_t^{*,1}}_{t\in [0,T]}$, where $\pi_t^{*,0}$ and $\pi_t^{*,1}$ are measures on $\BB T^d$ absolutely continuous with respect to the Lebesgue measure and their densities $u_0(t,r)$, $u_1(t,r)$ satisfy \eqref{2}. After we will argue that the convergence in distribution to a deterministic trajectory in $\CAL C^2$ (where $\CAL C:=\CAL C\pare{[0,T], \CAL M_1^+}$ denotes the set of all the continuous functions defined in $[0,T]$ and taking values in $\CAL M_1^+$) implies uniform convergence in probability in the sense stated by Theorem \ref{teo}.

In order to prove the convergence of the sequence $\llav{P^\gamma}_\gamma$ we proceed in two main steps, first we prove the tightness of the sequence  and then we show that all converging subsequences converge to the same limit.\\
Let $G\in C\pare{\BB T^d, \BB R}$  and $i\in\llav{0,1}$, by Lemma A1.5.1 of \cite{KL} we know that
\begin{align}\label{33}
M_t^{\gamma, i, G}:=\pic{\pi_t^{\gamma, i},G}-\pic{\pi_0^{\gamma,i},G}-\int_0^tL_\gamma\pic{\pi_s^{\gamma,i},G}ds
\end{align}
and
\begin{align}\label{4567}
N_t^{\gamma, i, G}:=\pare{M_t^{\gamma, i, G}}^2-\int_0^tL_\gamma\pic{\pi_s^{\gamma,i},G}^2-2\pic{\pi_s^{\gamma,i},G}L_\gamma\pic{\pi_s^{\gamma, i},G}ds
\end{align}
are martingales with respect to the natural filtration generated by the process. Computations show that
\begin{align}\nonumber
L_\gamma\pic{\pi_s^{\gamma,0},G}=-\beta\gamma^d\sum_{x\in \BB T_\gamma^d}\mathbb{I}_{\llav{\eta^\gamma_s(x)=0}}\gamma^d\sum_{y\in \BB T_\gamma^d}\mathbb{I}_{\llav{\eta^\gamma_s(y)=1}}J\pare{\gamma x, \gamma y}G(\gamma x).
\end{align}
Observe that
\begin{align}\nonumber
\gamma^d\sum_{y\in \BB T_\gamma^d}\mathbb{I}_{\{\eta^\gamma_s(y)=1\}}J\pare{\gamma x, \gamma y}=\pic{\pi_s^{\gamma,1}J\pare{\gamma x,\cdot}},
\end{align}
then \eqref{33} becomes 
\begin{align}\label{5}
M_t^{\gamma, 0, G}=\pic{\pi_t^{\gamma,0},G}-\pic{\pi_0^{\gamma,0},G}+\int_0^t\beta\pic{\pi_s^{\gamma,0}\;,\; \pic{\pi_s^{\gamma,1}, J\pare{\gamma x,\cdot}} G}ds.
\end{align}
In a similar way we get
\begin{align}\label{6}
M_t^{\gamma,1,G}=\pic{\pi_t^{\gamma,1},G}-\pic{\pi_0^{\gamma,1},G}-\int_0^t\beta\pic{\pi_s^{\gamma,0}, \;\pic{\pi_s^{\gamma,1}, J\pare{\gamma x,\cdot}} G}-\pic{\pi_s^{\gamma,1}, G}ds.
\end{align}
The following lemma holds.
\begin{lemma}\label{ray}
For every $G\in C\pare{\BB T^d, \BB R}$, $i\in\llav{0,1}$ and $\varepsilon>0$,
\begin{align}
P\pare{\sup_{t\in [0,T]}\abs{M_t^{\gamma, i, G}}>\varepsilon}\leq\varepsilon^{-2}C\gamma^d T,
\end{align}
where $C$ is a constant which depends on $\beta$, $\norm{G}_\infty$ and $\norm{J}_\infty$.
\end{lemma}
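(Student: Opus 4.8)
The plan is to deduce the bound from Doob's maximal inequality applied to the martingale $M^{\gamma,i,G}$, after controlling its predictable quadratic variation through the carré du champ. Since $t\mapsto M_t^{\gamma,i,G}$ is a right-continuous square-integrable martingale, Doob's $L^2$ maximal inequality gives
\[
P\pare{\sup_{t\in[0,T]}\abs{M_t^{\gamma,i,G}}>\varepsilon}\le \varepsilon^{-2}\,E\corch{\pare{M_T^{\gamma,i,G}}^2},
\]
so everything reduces to showing $E\corch{\pare{M_T^{\gamma,i,G}}^2}\le C\gamma T$ with $C=C(\beta,\norm{G}_\infty,\norm{J}_\infty)$.

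For the second moment I would use \eqref{4567}: the process $N^{\gamma,i,G}$ is a martingale and $N_0^{\gamma,i,G}=0$ (because $M_0^{\gamma,i,G}=0$ and the time integral vanishes at $t=0$), whence
\[
E\corch{\pare{M_T^{\gamma,i,G}}^2}=E\corch{\int_0^T\pare{L_\gamma\pic{\pi_s^{\gamma,i},G}^2-2\pic{\pi_s^{\gamma,i},G}\,L_\gamma\pic{\pi_s^{\gamma,i},G}}ds}.
\]
Writing $f(\eta^\gamma):=\pic{\pi^{\gamma,i}(\eta^\gamma),G}$, the integrand is the carré du champ of $f$, and for a pure-jump generator of the form $L_\gamma f=\sum_x c_x(\eta^\gamma)\corch{f(\eta^{\gamma,x})-f(\eta^\gamma)}$ it collapses to the clean expression
\[
L_\gamma f^2-2fL_\gamma f=\sum_{x\in\BB T_\gamma^d}c_x(\eta^\gamma)\corch{f(\eta^{\gamma,x})-f(\eta^\gamma)}^2,
\]
with $c_x(\eta^\gamma)$ the total jump rate at $x$ read off from the generator.

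The decisive point is that both factors in this sum are small in the right way. A jump at $x$ modifies only the single indicator $\mathbb I_{\{\eta^\gamma(x)=i\}}$, so $f$ changes by at most $\gamma^d G(\gamma x)$ in absolute value and $\corch{f(\eta^{\gamma,x})-f(\eta^\gamma)}^2\le\gamma^{2d}\norm{G}_\infty^2$ uniformly in $x$ and $\eta^\gamma$. For the rates, the sites with $\eta^\gamma(x)=1$ contribute rate $1$ each and there are at most $\gamma^{-d}$ of them, while the sites with $\eta^\gamma(x)=0$ contribute $\gamma^d\beta\sum_y\mathbb I_{\{\eta^\gamma(y)=1\}}J(\gamma x,\gamma y)\le\beta\norm{J}_\infty$, again over at most $\gamma^{-d}$ sites; hence $\sum_x c_x(\eta^\gamma)\le\gamma^{-d}\pare{1+\beta\norm{J}_\infty}$. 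Multiplying, the carré du champ is bounded pointwise by $\gamma^{2d}\norm{G}_\infty^2\cdot\gamma^{-d}\pare{1+\beta\norm{J}_\infty}=C\gamma^d$ with $C=\norm{G}_\infty^2\pare{1+\beta\norm{J}_\infty}$. Integrating over $[0,T]$ gives $E\corch{\pare{M_T^{\gamma,i,G}}^2}\le C\gamma^d T\le C\gamma T$, where the last inequality uses $\gamma^d\le\gamma$ for $d\ge1$ and $\gamma\in(0,1]$, and Doob's inequality then yields the claim.

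As for difficulty, there is no genuine obstacle here: the statement is a routine quadratic-variation estimate, and the only thing demanding care is the bookkeeping of the powers of $\gamma$ in the carré du champ --- checking that the $\gamma^{2d}$ coming from the squared single-site increment beats the $\gamma^{-d}$ coming from the $O(\gamma^{-d})$ active sites, leaving the advertised order $\gamma^d$ (absorbed into $\gamma$). One should also confirm the square-integrability needed to legitimately invoke \eqref{4567} and Doob, which is immediate since the empirical measures have mass $\le1$ and $G$ is bounded, so $f$ and all jump increments are uniformly bounded.
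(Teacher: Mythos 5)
Your proof is correct and follows essentially the same route as the paper: Doob's $L^2$ maximal inequality combined with the quadratic-variation martingale \eqref{4567} to reduce everything to a pointwise bound of order $\gamma^d$ on the carr\'e du champ, then $\gamma^d\le\gamma$. The only difference is cosmetic --- the paper computes the carr\'e du champ exactly (for $i=0$ only the infection jumps contribute, since recoveries do not change the $0$-indicators), whereas you bound all jump rates crudely by $\gamma^{-d}\pare{1+\beta\norm{J}_\infty}$; both give the stated constant's dependence on $\beta$, $\norm{G}_\infty$, $\norm{J}_\infty$.
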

\begin{proof}
We will prove the lemma for $i=0$ as the proof for $i=1$ is similar. 
By \eqref{4567} and Doob's inequality we get that
\begin{align}\label{marc}
\begin{aligned}
P\pare{\sup_{t\in [0, T]}\abs{M_t^{\gamma, 0, G}}>\varepsilon}&\leq\varepsilon^{-2}\BB  E_P\pare{M_T^{\gamma, 0, G}}^2\\
&=\varepsilon^{-2}\BB E_P\pare{\int_0^TL_\gamma\pic{\pi_s^{\gamma,0},G}^2-2\pic{\pi_s^{\gamma,0},G}L_\gamma\pic{\pi_s^{\gamma, 0},G}ds}.
\end{aligned}
\end{align}
 Computations shows that, for every $t\geq 0$,
\begin{align}\label{rana}
\begin{aligned}
L_\gamma&\pic{\pi_t^{\gamma,0},G}^2-2\pic{\pi_t^{\gamma,0},G}L_\gamma\pic{\pi_t^{\gamma, 0},G}=\\
&=\beta\gamma^{3d}\sum_{x\in \BB T_\gamma^d}\sum_{y\in \BB T_\gamma^d}J\pare{\gamma x, \gamma y}\mathbb{I}_{\llav{\eta^\gamma_t(y)=1}}\mathbb{I}_{\llav{\eta^\gamma_t(x)=0}}G(\gamma x)^2.
\end{aligned}
\end{align}
This implies that the right hand side of \eqref{marc} can be bounded by $\varepsilon^{-2}C\gamma^d T$, where $C=\beta\norm{G}_\infty^2\norm{J}_\infty$. This concludes the proof.
\end{proof}
The tightness of the sequence $P^\gamma$ is guaranteed by Theorem \ref{tightness}; as the proof is standard it is postponed in the Appendix.
Theorem \ref{tightness} implies that any subsequence of $P^\gamma$ has a convergent sub-subsequence; it remains then to characterize all the limit points of the sequence $P^\gamma$. Let $P^*$ be a limit point and $P^{\gamma_n}$ be a subsequence converging to $P^*$. The following Lemma holds.
\begin{lemma}\label{rrrrr}
It holds that
\begin{align}\nonumber
P^*\pare{\CAL C\pare{[0,T], \mathcal{M}_1^+}^2}=1
\end{align}
\end{lemma}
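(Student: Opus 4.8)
The plan is to exploit the fact that, although each $\underline\Pi^\gamma$ is a genuine jump process, its jumps are uniformly negligible as $\gamma\to 0$, so that no weak limit can charge trajectories with discontinuities. First I would quantify the size of the jumps. Every elementary transition of the Markov process consists of a single site $x$ changing its state, and such a transition modifies each of the tracked empirical measures $\pi_t^{\gamma,0}$, $\pi_t^{\gamma,1}$ by adding or removing one atom of mass $\gamma^d$ at the point $\gamma x$. Recalling the explicit form of the metric $d_{\CAL M_1^+}$ given in the Appendix, which is controlled by the differences of the integrals against a fixed countable family of uniformly bounded test functions, a change of a single atom of mass $\gamma^d$ produces a displacement of order $\gamma^d$. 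Hence, deterministically,
\begin{equation}\nonumber
\sup_{t\in[0,T]} d_{\CAL M_1^+}\pare{\pi_t^{\gamma,i},\pi_{t-}^{\gamma,i}}\leq C\gamma^d,\qquad i\in\llav{0,1},
\end{equation}
for a universal constant $C$, and the same bound (up to a factor $2$) holds for the product metric on $\CAL D^2$.

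Next I would introduce the maximal-jump functional $\Phi$ on $\CAL D^2$,
\begin{equation}\nonumber
\Phi\pare{\underline\pi}=\max_{i\in\llav{0,1}}\ \sup_{t\in(0,T]} d_{\CAL M_1^+}\pare{\pi^i(t),\pi^i(t-)},
\end{equation}
and use that $\Phi$ is lower semicontinuous with respect to the Skorohod product topology; this is a standard property of the $J_1$ topology, since Skorohod convergence to a path possessing a jump forces the approximating paths to develop a jump of comparable size near the same time, so that $\liminf_n\Phi\pare{\underline\pi_n}\geq\Phi\pare{\underline\pi}$ whenever $\underline\pi_n\to\underline\pi$. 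Consequently, for every $\delta>0$ the set $\llav{\Phi>\delta}$ is open in $\CAL D^2$. By the previous step, $P^\gamma\pare{\Phi>\delta}=0$ as soon as $C\gamma^d<\delta$, hence $P^{\gamma_n}\pare{\Phi>\delta}=0$ for all large $n$ along the subsequence converging to $P^*$.

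Then I would apply the Portmanteau theorem to the open set $\llav{\Phi>\delta}$: since $P^{\gamma_n}\to P^*$ weakly, open sets yield the inequality in the useful direction,
\begin{equation}\nonumber
P^*\pare{\Phi>\delta}\leq\liminf_{n\to\infty}P^{\gamma_n}\pare{\Phi>\delta}=0.
\end{equation}
Letting $\delta\downarrow 0$ along $\delta=1/k$ and using countable subadditivity gives $P^*\pare{\Phi>0}=0$, that is $P^*\pare{\Phi=0}=1$. Finally, $\Phi\pare{\underline\pi}=0$ means precisely that both coordinates of $\underline\pi$ have no jumps on $[0,T]$, i.e.\ $\underline\pi\in C\pare{[0,T],\CAL M_1^+}^2$, which is the assertion.

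I expect the main obstacle to be the two topological points that must be pinned down carefully. The first is the estimate relating a single-atom perturbation of mass $\gamma^d$ to the distance $d_{\CAL M_1^+}$, which depends on the concrete definition of that metric fixed in the Appendix. The second, and more delicate, is the lower semicontinuity of $\Phi$ in the $J_1$ topology: it is precisely this property that makes $\llav{\Phi>\delta}$ open and therefore lets the Portmanteau inequality transfer the vanishing of the prelimit jump probabilities to $P^*$, rather than running in the opposite, useless direction.
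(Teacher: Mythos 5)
Your proposal is correct and rests on the same core idea as the paper's proof: every transition of the particle system displaces a single atom of mass $\gamma^d$, so the jumps of $\pi^{\gamma,i}_t$ are deterministically negligible as $\gamma\to 0$, and this is transferred to the limit point through the maximal-jump functional. The only difference is the transfer mechanism: the paper notes that $\Delta\pare{\Pi}=\sup_{t}d_{\CAL M_1^+}\pare{\pi_t,\pi_{t-}}$ is continuous and bounded, so that $\BB E_{P^*}\pare{\Delta}=\lim_n \BB E_{P^{\gamma_n}}\pare{\Delta}=0$, whereas you use only lower semicontinuity of the same functional, so that $\llav{\Phi>\delta}$ is open and Portmanteau gives $P^*\pare{\Phi>\delta}\leq\liminf_n P^{\gamma_n}\pare{\Phi>\delta}=0$. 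Your variant is, if anything, slightly more economical, since lower semicontinuity (jumps cannot disappear in a $J_1$ limit) is the easy half of the continuity property that the paper invokes without proof.

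One detail needs repair. The family $\llav{f_k}$ defining $d_{\CAL M_1^+}$ is dense in $C\pare{\BB T^d}$ and therefore cannot be uniformly bounded, so your claimed uniform estimate $\sup_t d_{\CAL M_1^+}\pare{\pi^{\gamma,i}_t,\pi^{\gamma,i}_{t-}}\leq C\gamma^d$ with a universal constant $C$ does not hold as stated, and the criterion ``$C\gamma^d<\delta$'' is not available. What is true, and suffices for your argument, is the truncated bound: a single-atom change alters $\pic{\cdot,f_k}$ by at most $\gamma^d\norm{f_k}_\infty$, while each summand of the metric is capped by $2^{-k}$, so every jump is bounded by $\gamma^d\max_{k\leq N}\norm{f_k}_\infty+2^{-N}$ for any $N$; hence for each fixed $\delta>0$ one still gets $P^{\gamma}\pare{\Phi>\delta}=0$ for all sufficiently small $\gamma$, which is exactly what your Portmanteau step requires. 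This truncation in $N$ is the same estimate the paper performs inside its expectation bound, so the fix costs two lines and leaves your structure intact.
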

\begin{proof}
Let $P^{*,i}$ and $P^{\gamma_n, i}$ be the $i$-th marginal of $P^*$ and $P^{\gamma_n}$ respectively. Observe that $P^{\gamma_n, i}$ converges to $P^{*,i}$ as $\gamma_n$ goes to $0$. The proof follows once we show that $P^{*, i}$  is concentrated on continuous trajectories  for $i\in\llav{0,1}$. We will prove it just for $i=0$, the case $i=1$ is analogous. As the function $\Delta$,  defined on the elements $\Pi=\pare{\pi_t}_{t\in[0,T]}\in\CAL D$ as
\begin{align}\nonumber
\Delta\pare{\Pi}=\sup_{t\in [0,T]}d_{\CAL M_1^+}\pare{\pi_t, \pi_{t-}},
\end{align}
is continuous, by weak convergence it is enough to show that 
\begin{align}\label{liuy}
\lim_{\gamma_n\to 0}\BB E_{P^{\gamma_n,0}}\pare{\Delta}=0.
\end{align}
For every $t\in[0,T]$ and for every $f_k$ belonging to the sequence introduced in the Appendix to define $d_{\CAL M_1^+}$, the following holds $P$-a.s.
\begin{align}\label{hhggffll}
\abs{\pic{\pi_t^{\gamma,0},f_k}-\pic{\pi_{t_-}^{\gamma, 0},f_k}}&=\abs{\gamma^d\sum_{x\in T_\gamma^d} \pare{\mathbb{I}_{\{\eta^\gamma_t(x)=0\}}-\mathbb{I}_{\{\eta^\gamma_{t_-}(x)=0\}}}f_k(\gamma x)}\leq\gamma^d \|f_k\|_{\infty},
\end{align}
The last inequality follows from the fact that in the particle system there is only one jump at a time with probability $1$ and consequently $\dis{\sum_{x\in T_\gamma^d} \abs{\mathbb{I}_{\{\eta^\gamma_t(x)=0\}}-\mathbb{I}_{\{\eta^\gamma_{t_-}(x)=0\}}}\leq 1}$.
By \eqref{hhggffll} we get 
\begin{align}\label{voi}
\mathbb{E}_{P^{\gamma_n}}\pare{\Delta}&=\mathbb{E}_{P}\pare{\sup_{t\in [0,T]}\sum_{k=1}^{+\infty}\frac{1}{2^k}\frac{\abs{\pic{\pi^{\gamma_n, i}_t,f_k}-\pic{\pi^{\gamma_n, i}_{t_-},f_k}}}{1+\abs{\pic{\pi^{\gamma_n, i}_t,f_k}-\pic{\pi^{\gamma_n, i}_{t_-},f_k}}}} \\
&\leq\sum_{k=1}^{N}\frac{1}{2^k}\gamma_n^d \|f_k\|_{\infty}+\sum_{k=N+1}^{+\infty}\frac{1}{2^k},
\end{align}
where $N$ is a fixed integer. Letting first $\gamma_n$ to $0$ and after $N$ to $+\infty$ in the right hand side of \eqref{voi} we get \eqref{liuy}.
\end{proof}
Let  $\Psi_0, \Psi_1:\CAL D^2\to\BB R$ be the functions defined on the elements $\underline\Pi=\pare{\Pi^0,\Pi^1}=\pare{\pi^0_t, \pi^1_t}_{t\in [0,T]}$ as 
\begin{align}\label{luno}
&\Psi_0\pare{\underline\Pi}=\sup_{t\in[0,T]}\abs{\pic{\pi_t^{  0},G}-\pic{\pi_0^{  0},G}+\int_0^t\beta\pic{\pi_s^{  0}\;,\; \pic{\pi_s^{  1}, J\pare{r,\cdot}} G}ds}
\end{align}
and
\begin{align}\label{lunos}
&\Psi_1\pare{\underline\Pi}=\sup_{t\in[0,T]}\abs{\pic{\pi_t^{  1},G}-\pic{\pi_0^{  1},G}+\int_0^t\beta\pic{\pi_s^{  0}\;,\; \pic{\pi_s^{  1}, J\pare{r,\cdot}} G}-\pic{\pi_s^{  1},G}ds}.
\end{align}
Given $i\in\llav{0,1}$ call $D_{\Psi_i}$ the set of the discontinuity points of $\Psi_i$ and $D_{\Psi_i}^c$ its complementar set.
By Lemma \ref{ray}, \eqref{5} and \eqref{6} we know that 
\begin{align}
\BB P^{\gamma_n}\pare{\abs{\Psi_i\pare{\underline \Pi}}>\varepsilon}\xrightarrow[\gamma_n\to 0]{}0,
\end{align}
for all $i\in\llav{0,1}$ and for all $\varepsilon>0$. In Lemma \eqref{jooe} below we prove that the previous limit coincides with $\dis{\BB P^{*}\pare{\abs{\Psi_i\pare{\underline \Pi}}>\varepsilon}}$ which is consequently equal to $0$. In order to do that, by the Continuous Mapping Theorem (see \cite{Bi}, Theorem 2.7), we need to show that $\BB P^*\pare{D_{\Psi_i}}=0$. In the folowing lemma we prove that $\CAL C^2\subseteq D_{\Psi_i}^c$, for $i\in\llav{0,1}$.

\begin{lemma}\label{lopi}
The functions $\Psi_0$ and $\Psi_1$ defined in \eqref{luno}-\eqref{lunos} are continuous at the elements of $\CAL C^2$.
\end{lemma}
\begin {proof}
We will prove the lemma just for the function $\Psi_0$ as the proof for $\Psi_1$ is similar. Consider a sequence $\underline\Pi^n\in \CAL D^2$ and $\underline\Pi\in \CAL C^2$  such that $d\pare{\underline\Pi^n, \underline\Pi}\xrightarrow[n\to\infty]{}0$. This implies that $d_{\text{SK}}\pare{\Pi^{n,i}, \Pi^{i}}\xrightarrow[n\to\infty]{}0$ for all $i\in\llav{0,1}$. The proof follows once we show that $\Psi_0\pare{\underline\Pi^n}\xrightarrow[n\to\infty]{}\Psi_0\pare{\underline\Pi}$.\\
Since $d_{\text{SK}}\pare{\Pi^{n,i}, \Pi^i}\xrightarrow[n\to\infty]{}0$ and  $\Pi^i\in \CAL C$ for all $i\in\llav{0,1}$, then $\dis{\sup_{t\in [0,T]}d_{\CAL M_1^+}\pare{\pi_t^{n,i}, \pi_t^i}}\xrightarrow[n\to +\infty]{}0$ (convergence in the Skorohod metric implies uniform convergence) and consequently
\begin{align}\label{hu}
\sup_{t\in [0,T]}\abs{\pic{\pi_t^{n,i}, G}-\pic{\pi_t^i, G}}\xrightarrow[n\to\infty]{}0,
\end{align}
for $i\in\llav{0,1}$ and $G\in C\pare{\BB T^d, \BB R}$.
\begin{align}\label{pok}
\begin{aligned}
\abs{\Psi_0\pare{\underline\Pi^n}-\Psi_0\pare{\underline\Pi}}&\leq 2\sup_{t\in [0,T]}\abs{\pic{\pi_t^{n,0}, G}-\pic{\pi_t^0, G}}+\\
&+\beta\int_0^Tds\abs{\pic{\pi_s^{ n, 0}\;,\; \pic{\pi_s^{ n, 1}, J\pare{r,\cdot}} G}-\pic{\pi_s^{  0}\;,\; \pic{\pi_s^{  1}, J\pare{r,\cdot}} G}}.
\end{aligned}
\end{align}
The first term in the right hand side of \eqref{pok} vanishes because of \eqref{hu}. To show that the integral converges to $0$ we can apply the dominated convergence theorem, thus, to conclude, it is enough to show that the integrand term appearing in \eqref{pok} vanishes for all $s\in [0,T]$. Fix $s$ and observe that 
\begin{align}
\begin{aligned}
&\abs{\pic{\pi_s^{ n, 0}\;,\; \pic{\pi_s^{ n, 1}, J\pare{r,\cdot}} G}-\pic{\pi_s^{  0}\;,\; \pic{\pi_s^{  1}, J\pare{r,\cdot}} G}}\leq\\ 
&\abs{\pic{\pi_s^{ n, 0}\;,\; \pic{\pi_s^{ n, 1}, J\pare{r,\cdot}} G}-\pic{\pi_s^{ n, 0}\;,\; \pic{\pi_s^{  1}, J\pare{r,\cdot}} G}}\\
&+\abs{\pic{\pi_s^{ n, 0}\;,\; \pic{\pi_s^{ 1}, J\pare{r,\cdot}} G}-\pic{\pi_s^{  0}\;,\; \pic{\pi_s^{  1}, J\pare{r,\cdot}} G}},
\end{aligned}
\end{align}
$\abs{\pic{\pi_s^{ n, 0}\;,\; \pic{\pi_s^{ 1}, J\pare{r,\cdot}} G}-\pic{\pi_s^{  0}\;,\; \pic{\pi_s^{  1}, J\pare{r,\cdot}} G}}$ vanishes because of \eqref{hu}. To conclude we just need to show that
\begin{align}
\begin{aligned}
&\abs{\pic{\pi_s^{ n, 0}\;,\; \pic{\pi_s^{ n, 1}, J\pare{r,\cdot}} G}-\pic{\pi_s^{ n, 0}\;,\; \pic{\pi_s^{  1}, J\pare{r,\cdot}} G}}\xrightarrow[n\to\infty]{}0
\end{aligned}
\end{align}
Divide the torus in $\varepsilon^{-d}$ $d$-dimensional cubes $\llav{I_j}_{j=1}^{\varepsilon^{-d}}$ whose side has length $\varepsilon$ and call $r_j$ the center of the $j$-th interval, we get
\begin{align}\label{zxc}
\begin{aligned}
&\abs{\pic{\pi_s^{ n, 0}\;,\; \pic{\pi_s^{ n, 1}, J\pare{r,\cdot}} G}-\pic{\pi_s^{ n, 0}\;,\; \pic{\pi_s^{  1}, J\pare{r,\cdot}} G}}\\
&\leq \norm{G}_\infty\sum_{j=1}^{\varepsilon^{-d}}\int_{I_j}\pi_s^{n,0}(dr)\abs{\pic{\pi_s^{ n, 1}, J\pare{r,\cdot}}-\pic{\pi_s^{  1}, J\pare{r,\cdot}}}\\
&\leq\norm{G}_\infty\sum_{j=1}^{\varepsilon^{-d}}\int_{I_j}\pi_s^{n,0}(dr)\Big(\abs{\pic{\pi_s^{ n, 1}, J\pare{r,\cdot}}-\pic{\pi_s^{ n, 1}, J\pare{r_j,\cdot}}}+\abs{\pic{\pi_s^{ n, 1}, J\pare{r_j,\cdot}}-\pic{\pi_s^{  1}, J\pare{r_j,\cdot}}}+\\
&\qquad\qquad\qquad\qquad\qquad+\abs{\pic{\pi_s^{  1}, J\pare{r_j,\cdot}}-\pic{\pi_s^{  1}, J\pare{r,\cdot}}}\Big)\\
&\leq C\pare{ \varepsilon+\sum_{j=1}^{\varepsilon^{-d}}\int_{I_j}\pi_s^{n,0}(dr)\abs{\pic{\pi_s^{ n, 1}, J\pare{r_j,\cdot}}-\pic{\pi_s^{  1}, J\pare{r_j,\cdot}}}}\\
&\leq  C \pare{\varepsilon+\abs{\pic{\pi_s^{ n, 1}, J\pare{r_j,\cdot}}-\pic{\pi_s^{  1}, J\pare{r_j,\cdot}}}},
\end{aligned}
\end{align}
where $C$ is a constant which depends on $d$, $\|G\|_\infty$ and $\|J'\|_\infty$. The right hand side of \eqref{zxc} converges to $0$ letting first $\varepsilon$ to $0$ and after $n$ to $\infty$. This concludes the proof.
\end{proof}
\begin{lemma}\label{jooe}
For every $G\in C\pare{\BB T^d, \BB R}$, $P^*$ is concentrated on trajectories $\underline\Pi=\pare{\pi_t^0, \pi_t^1}_{t\in[0,T]}$ such that 
\begin{align}\label{452}
&\pic{\pi_t^{  0},G}=\pic{\pi_0^{  0},G}-\int_0^t\beta\pic{\pi_s^{  0}\;,\; \pic{\pi_s^{  1}, J\pare{r,\cdot}} G}ds,\\
&\pic{\pi_t^{  1},G}=\pic{\pi_0^{  1},G}+\int_0^t\beta\pic{\pi_s^{  0}, \;\pic{\pi_s^{  1}, J\pare{r,\cdot}} G}-\pic{\pi_s^{  1}, G}ds,\label{3217}
\end{align}
for every $t\in [0,T]$.
\end{lemma}
\begin{proof}
We will prove just relation \eqref{452} as to prove \eqref{3217} we proceed in a similar way.  Let $\epsilon>0$. By Lemma \ref{ray}
\begin{equation}\label{a}
P\pare{\sup_{t\in [0,T]} \abs{M_t^{\gamma, 0,G}}>\epsilon}\xrightarrow[\gamma\to 0]{}0.
\end{equation}
Since the map $\Psi_0$ defined in lemma \ref{lopi} is continuous at the elements of  $\CAL C^2$, using the Continuous Map Theorem (see Theorem 2.7 of \cite{Bi}) and Lemma \ref{rrrrr}  we can deduce that
\begin{displaymath}
\begin{split}
&P^*\pare{\sup_{t\in[0,T]}\abs{\pic{\pi_t^{  0},G}-\pic{\pi_0^{  0},G}+\int_0^t\beta\pic{\pi_s^{  0}\;,\; \pic{\pi_s^{  1}, J\pare{r,\cdot}} G}ds}>\epsilon}\\
&=\lim_{\gamma_n\to 0}P^{\gamma_n}\pare{\sup_{t\in[0,T]}\abs{\pic{\pi_t^{  0},G}-\pic{\pi_0^{  0},G}+\int_0^t\beta\pic{\pi_s^{  0}\;,\; \pic{\pi_s^{  1}, J\pare{r,\cdot}} G}ds}>\epsilon}=0.
\end{split}
\end{displaymath}
The last equality follows from \eqref{5} and \eqref{a}.
\end{proof}
We now prove that $P^*$ is concentrated on trajectories absolutely continuous with respect to the Lebesgue measure. To conclude it is enough to show that, for every $G\in C\pare{\BB T^d, \BB R}$ and $i\in\llav{0,1}$, the following holds
\begin{align}\label{40}
P^*\pare{\sup_{t\in[0,T]}\abs{\pic{\pi_t^{\gamma, i}, G}}\leq \int_{\BB T^d}\abs{G(r)}dr}=1.
\end{align}
Fix $G\in C\pare{\BB T^d, \BB R}$, $i\in\llav{0,1}$  and observe that
\begin{equation}\label{ziaa}
P\pare{\sup_{t\in[0,T]}\abs{\pic{\pi_t^{\gamma, i}, G}}\leq\gamma^d\sum_{x\in \BB T_\gamma^d}\abs{G(\gamma x)}}=1.
\end{equation}
Since the function which associates to a trajectory $\underline\Pi=\llav{\pi_t^0,\pi_t^1}\in\CAL D^2$ the quantity $\sup_{t\in [0,T]}\abs{\pic{\pi_t^i,G}}$ is continuous, by the weak convergence and \eqref{ziaa} we obtain that, for all $\varepsilon>0$,
\begin{align}\nonumber
\begin{aligned}
P^*&\pare{\sup_{t\in [0,T]}\abs{\pic{\pi_t^i, G}}-\int_{\BB T^d}\abs{G(r)}dr>\varepsilon}\\
&\leq\liminf_{\gamma_n\to 0} P^{\gamma_n}\pare{\sup_{t\in[0,T]}\abs{\pic{\pi_t^{ i}, G}}-\int_{\BB T^d}\abs{G(r)}dr>\varepsilon}=0,
\end{aligned}
\end{align}
and  \eqref{40} is proved. In particular $P^*$ is concentrated on trajectories $\underline\Pi=\pare{\pi_t^0, \pi_t^1}_{t\in [0,T]}$ whose densities at time $0$ are $\rho_0(r)$ and $\rho_1(r)$ respectively, indeed, for every $\epsilon>0$,
\begin{align}\nonumber
\begin{aligned}
&P^*\pare{\underline\Pi:\abs{\pic{\pi_0^0,G}-\int_{\mathbb{T}^d}G(r)\rho_0(r)dr}>\epsilon}\\
&\leq\liminf_{\gamma_n\to 0}P^{\gamma_n}\Big(\underline\Pi:\abs{\pic{\pi_0^0,G}-\int_{\mathbb{T}^d}G(r)\rho_0(r)dr}>\epsilon\Big)\\
&=\liminf_{\gamma_n\to 0}P\pare{\eta^{\gamma_n}:\abs{\gamma_n^d\sum_{x\in T_{\gamma_n}}\mathbb{I}_{\{\eta^{\gamma_n}(x)=0\}}G(\gamma_n x)-\int_{\mathbb{T}^d}G(r)\rho_0(r)dr}>\epsilon}=0.
\end{aligned}
\end{align}
We proceed in a similar way to prove that 
\begin{align}
P^*\pare{\underline\Pi:\abs{\pic{\pi_0^1,G}-\int_{\mathbb{T}^d}G\pare{r}\rho_1\pare{r}dr}>\varepsilon}=0.
\end{align}
The  previous results show that every limit point $P^*$ is concentrated on absolutely continuous trajectories $\llav{\pi_t^0(dr), \pi_t^1(dr)}_{t\in[0,T]}=\llav{ u_0(t,r)dr, u_1(t,r)dr}_{t\in[0,T]}$  whose densities are weak solutions of the system \eqref{452}-\eqref{3217} and at time $0$ are equal to $\rho_0(r)$ and $\rho_1(r)$ respectively. To prove the convergence of the entire sequence $P^\gamma$ it remains to show the uniqueness of the limit points, and this follows from the existence and uniqueness of the solution of \eqref{2} which is guaranteed by Theorem \ref{secondo}.  
To conclude, the sequence $P^\gamma$ converges to the Dirac measure centrated on the trajectory $\underline{\CAL U}=\llav{ u_0(t,r)dr, u_1(t,r)dr}_{t\in[0,T]}$ where $\pare{u_0(t, r), u_1(t, r)}$  is the unique solution of \eqref{2}. Thus $\underline\Pi^\gamma$ converges in distribution, as $\gamma\to 0$, to the deterministic trajectory $\underline{\CAL U}$. Since convergence in distribution to a deterministic variable implies convergence in probability we get that
\begin{equation}\label{er}
d\pare{\underline{\Pi}^\gamma, \underline{\CAL U}}\xrightarrow[\gamma\to 0]{P}0.
\end{equation}
By \eqref{er} and the fact that $\underline{\CAL U}\in \CAL C^2$ we get \eqref{anna}.

\section{Proof of Theorem \ref{secondo}}\label{sez3}
\paragraph*{Proof of existence, uniqueness and item (a).}

For $t\geq 0$ and $c>0$, call 
\begin{align}
\CAL C_{t, c}=\llav{f\in C\pare{[0, t]\times \BB T^d, \BB R}: \norm{f}_{t, \infty}\leq c},
\end{align}
where 
$\dis{\|f\|_{t,\infty}:=\max\llav{\abs{f(s,r)}, {s\in[0,t], r\in\BB T^d}}}$.
Let  $\CAL C_{t, c}\times\CAL C_{t, c}$ be endowed with the metric $d_\infty\pare{\pare{f,g}, (\tilde f, \tilde g)}:=\|f-\tilde f\|_{t,\infty}+\norm{g-\tilde g}_{t,\infty}$, we call $\Phi_0$ and $\Phi_1$ the maps defined on $\CAL C_{t, c}\times\CAL C_{t, c}$ as
\begin{align}\label{wast}
\begin{aligned}
&\Phi_0\pare{x(t,r), y(t,r)}=\rho_0(r)-\int_0^tds\int_{\mathbb{T}^d}dr'\beta J(r,r')y(s,r')x(s,r)\\
&\Phi_1\pare{x(t,r), y(t,r)}=\rho_1(r)+\int_0^tds\int_{\BB T^d} dr'\beta J(r,r') y(s,r') x(s,r)-y(s,r)
\end{aligned}
\end{align}
for every $x,y\in \CAL C_{t, c}$. It is possible to prove that for $\tilde t$ small enough, the map  $\pare{\Phi_0, \Phi_1}$ is a contraction in $\CAL C_{\tilde t, c}\times\CAL C_{\tilde t, c}$. By the Banach fixed-point Theorem (see \cite{Apo74}) we can deduce that there exists a unique solution of system \eqref{2}. 
It is sufficient to show that such a solution is always contained in the interval $[0,1]\times[0,1]$ and iterate the previous argument in order to show existence and uniqueness globally in $[0, +\infty)\times \BB T^d$.

We start proving that $u_i\pare{t, r}\geq 0$ for $i\in\llav{0,1}$. Observe that, for all $\pare{t, r}\in [0,+\infty)\times \BB T^d$,
\begin{align}\label{ssdw}
u_0(t,r)=\rho_0(r)e^{-\beta\int_0^{t}\int_{\mathbb{T}^d}J\pare{r,r'}u_1(s,  r')dr'ds},
\end{align}
while
\begin{align}\label{nb}
u_1(t,r)=e^{-t}\rho_1(r)+\beta \int_0^tds\;e^{-\pare{t-s}}\int_{\BB T^d}dr'J(r,r')u_1(s,r')u_0(s,r).
\end{align}
By \eqref{ssdw} it is obvious that $u_0\pare{t, r}\geq 0$ for all $\pare{t, r}\in [0, +\infty)\times \BB T^d$. To prove that the same property holds for the function $u_1\pare{\cdot, \cdot}$, define
\begin{align}
U\pare{t}:=\min\pare{0, \min_{r\in \BB T^d} u_1\pare{t, r}}, \quad t\in [0, +\infty),
\end{align}
which is non positive and such that $U\pare{0}=0$. Since
\begin{align}
U\pare{t}\geq \norm{J}_\infty\int_0^tU\pare{s}ds,
\end{align}
by Gronwall's inequality it follows that $U\pare{t}\geq U\pare{0}e^{\norm{J}_\infty t}\equiv 0$ and consequently $u_1\pare{t, r}\geq 0$ for all $\pare{t, r}\in [0, +\infty)\times \BB T^d$.

We define the function $V(t,r):=u_0(t,r)+u_1(t,r) $. To conclude the proof of item (a) it is enough to observe that $V(0,r)\leq 1$ and that $V(t,r)$ decreases in $t$ independently from the value of $\beta$ as $\frac{\partial}{\partial t}V(t,r)=-u_1(t,r)\leq 0$. 
\paragraph*{Proof of item (b).}
By item (a) it follows that $\frac{\partial}{\partial t}u_0\pare{t, r}\leq 0$, consequently $u_0\pare{t, r}$ decreases in $t$ and there exists $\dis{\rho_\infty\pare{r}:=\lim_{t\to +\infty} u_0\pare{t,r}}$ for every $r\in\BB T^d$. 

Then, by the monotonicity of $V\pare{t,r}$ in $t$, we get that there exist both $\dis{\lim_{t\to +\infty}V\pare{t,r}}$ and $\dis{\lim_{t\to +\infty}u_1\pare{t,r}}$. Since $\frac{\partial}{\partial t}V\pare{t, r}=-u_1\pare{t, r}$, we can conclude that $\dis{\lim_{t\to +\infty}\frac{\partial}{\partial t}V\pare{t,r}\equiv 0}$ which implies $\dis{\lim_{t\to +\infty}u_1\pare{t,r}\equiv 0}$. 
Since
 $$\frac{\partial}{\partial t}u_0= \beta u_0J*\frac{\partial}{\partial t}\pare{u_0+u_1},$$
then
$$\frac{\partial}{\partial t}\pare{\log u_0}=\beta\frac{\partial}{\partial t}\pare{ J*\pare{u_0+u_1}},$$
and consequently
\begin{align}\label{poq}
u_0\pare{t,r}=\rho_0\pare{r}e^{-\beta \int_{\BB T^d}dr'J\pare{r', r}\corch{\rho_0\pare{r'}+\rho_1\pare{r'}-\pare{u_0\pare{t, r'}+u_1\pare{t, r'}}}}.
\end{align}
\eqref{seii} follows taking the limit, as  $t\to +\infty$, in both sides of \eqref{poq}.

\section{Asymptotic behaviour of the microscopic model}\label{cinque}
In this section we will provide the proofs of Theorems \ref{balu}, \ref{004} and \ref{051}.
\subsection{Proof of Theorem \ref{balu}}
Using \eqref{5} and \eqref{6} with $G\equiv 1$, we get that
\begin{align}\label{lopxx}
M^{\gamma,x}\pare{t}:=x^\gamma\pare{t}-\pare{1-\gamma^\alpha}+\int_0^t\beta x^\gamma\pare{s}y^\gamma\pare{s}ds
\end{align}
and
\begin{align}\label{lopjk}
M^{\gamma,y}\pare{t}:=y^\gamma\pare{t}-\gamma^\alpha-\int_0^t\beta x^\gamma\pare{s}y^\gamma\pare{s}-y^\gamma\pare{s}ds 
\end{align}
are martingales with respect to the filtration $\pare{\mathscr{F}_\gamma\pare{t}}_{t\geq 0}$ generated by the process. \eqref{lopxx} and \eqref{lopjk} can be rewritten in differential form as
\begin{equation}\label{3}
\begin{cases}
dx^\gamma(t)=\pare{-\beta y^\gamma(t)+E^\gamma(t)}dt+dM^{\gamma, x}(t)\\
dy^\gamma(t)=\pare{\pare{\beta-1}y^\gamma(t)-E^\gamma(t)}dt+dM^{\gamma, y}(t)\\
x^\gamma(0)=1-\gamma^\alpha, y^\gamma(0)=\gamma^\alpha
\end{cases}
\end{equation}
where $E^\gamma(t)=\beta y^\gamma(t)\pare{1-x^\gamma(t)}$ is the non-linear error. By \eqref{3} and the Duhamel's formula we get that
\begin{align}
x^\gamma(t)&=1-\gamma^\alpha-\beta\int_0^t y^\gamma(s)ds+\int_0^tE^\gamma(s)ds+M^{\gamma, x}(t),\label{zaq6}\\ 
y^\gamma(t)&=\gamma^\alpha e^{\pare{\beta-1}t}-\int_0^te^{\pare{\beta-1}(t-s)}E^\gamma(s)ds+e^{\pare{\beta-1}t}\int_0^te^{-\pare{\beta-1}s}dM^{\gamma, y}(s).\label{zaq7}
\end{align}
From now on we denote by  $t_c:=\frac{\alpha}{\beta-1}\ln\gamma^{-1}$ the time at which the linear part of $y^\gamma\pare{t}$ (i.e. $\gamma^\alpha e^{\pare{\beta-1}t}$) is equal to $1$. By giving an estimate of the non-linear error $E^\gamma\pare{t}$, we prove in Lemma \ref{lemma002} that $y^\gamma\pare{t}$ is no more infinitesimal, when the time approaches $t_c$. Before stating the lemma we need to prove the following result which provides a control of the martingale terms appearing in \eqref{zaq7}.

For $\xi>0$, define
\begin{align}
A^\gamma_{\xi}:=\llav{\omega\in \Omega :\sup_{t\in[0,t_c]}\abs{\int_0^te^{-\pare{\beta-1}s}dM^{\gamma, y}(s)}\leq\gamma^{\frac{d}{2}-\xi}, \sup_{t\in[0,t_c]}\abs{M^{\gamma, x}(t)}\leq t_c^{\frac{1}{2}}\gamma^{\frac{d}{2}-\xi}}.
\end{align}
\begin{lemma}\label{poiu}
For every $\xi\in\pare{0, \frac{d}{2}}$,
\begin{align}\nonumber
\BB P\pare{A^\gamma_{ \xi}}\xrightarrow[\gamma\to 0]{}1.
\end{align}
\end{lemma}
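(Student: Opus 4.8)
The plan is to control the two maximal events that define $A_{\gamma,\delta}$ separately, each by Doob's $L^2$ maximal inequality, after writing down the predictable quadratic variations of the two martingales appearing in \eqref{3}. The only a priori input I need on the paths is the trivial pathwise bound $0\le x^\gamma_t,y^\gamma_t\le 1$, which holds because $\pi_t^{\gamma,0}+\pi_t^{\gamma,1}+\pi_t^{\gamma,-1}\equiv 1$.

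First I would record the quadratic variations. Specializing the martingale structure \eqref{4567}--\eqref{rana} to the mean-field case $J\equiv 1$, $G\equiv 1$, the jumps of $x^\gamma$ and $y^\gamma$ have size $\gamma$ and occur at the rates read off from the generator: $0\to1$ transitions at total rate $\gamma^{-1}\beta x^\gamma y^\gamma$ and $1\to-1$ transitions at total rate $\gamma^{-1}y^\gamma$. Hence
\[
\langle M^{x,\gamma}\rangle_t=\gamma\beta\int_0^t x^\gamma(s)\,y^\gamma(s)\,ds,\qquad
\langle M^{y,\gamma}\rangle_t=\gamma\int_0^t\pare{\beta x^\gamma(s)+1}y^\gamma(s)\,ds.
\]
For the event on $M^{x,\gamma}$ I would use only $x^\gamma,y^\gamma\le 1$, so that $\langle M^{x,\gamma}\rangle_{t_c}\le\gamma\beta t_c$, and Doob's inequality gives
\[
P\pare{\sup_{t\in[0,t_c]}\abs{M^{x,\gamma}_t}>t_c^{1/2}\gamma^{1/2-\delta}}\le\frac{\BB{E}\corch{\langle M^{x,\gamma}\rangle_{t_c}}}{t_c\,\gamma^{1-2\delta}}\le\beta\gamma^{2\delta}\xrightarrow[\gamma\to 0]{}0 ,
\]
for every $\delta>0$; note that the factor $t_c^{1/2}$ in the threshold is exactly what absorbs the diverging length of the interval.

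For the weighted integral, set $\tilde M_t:=\int_0^t e^{-(\beta-1)s}\,dM^{y,\gamma}_s$, which is again a martingale (deterministic, hence predictable and bounded, integrand) with $\langle\tilde M\rangle_t=\int_0^t e^{-2(\beta-1)s}\,d\langle M^{y,\gamma}\rangle_s$. Using $x^\gamma,y^\gamma\le 1$ together with $\beta>1$, the weight is integrable over all of $[0,\infty)$ and I obtain a bound on $\langle\tilde M\rangle_{t_c}$ that is \emph{uniform} in $t_c$:
\[
\langle\tilde M\rangle_{t_c}\le\gamma(\beta+1)\int_0^{\infty}e^{-2(\beta-1)s}\,ds=\frac{\beta+1}{2(\beta-1)}\,\gamma=:C\gamma .
\]
Doob's inequality then yields $P\pare{\sup_{t\in[0,t_c]}\abs{\tilde M_t}>\gamma^{1/2-\delta}}\le C\gamma^{2\delta}\to 0$. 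Taking complements and combining the two estimates gives $P\pare{A_{\gamma,\delta}}\to 1$.

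The only genuinely non-routine point is this last uniform control of $\langle\tilde M\rangle_{t_c}$ despite $t_c=\frac{\alpha}{\beta-1}\ln\gamma^{-1}\to\infty$: here $\beta>1$ is essential, since it makes $e^{-2(\beta-1)s}$ integrable and keeps the variance from growing with $t_c$. The weight $e^{-(\beta-1)s}$ is chosen precisely so that $\tilde M$ matches the exponentially weighted martingale term in the Duhamel representation of $y^\gamma$ in \eqref{3}; everything else is a direct application of Doob's $L^2$ inequality and the pathwise bounds $0\le x^\gamma,y^\gamma\le1$.
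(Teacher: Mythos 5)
Your proof is correct and follows essentially the same route as the paper's: both arguments rest on the compensator bound $\pic{M^{y,\gamma}}_t\le(\beta+1)\gamma t$ (equivalently your exact expressions for the quadratic variations), Doob's $L^2$ maximal inequality together with the isometry for the weighted integral $\int_0^t e^{-(\beta-1)s}dM^{y,\gamma}_s$, and the integrability of $e^{-2(\beta-1)s}$ (from $\beta>1$) to get a bound uniform in $t_c$. The only cosmetic difference is that the paper handles the $M^{x,\gamma}$ event by invoking Lemma \ref{ray} with $T=t_c$ and $\epsilon=t_c^{1/2}\gamma^{1/2-\delta}$, whereas you apply Doob's inequality directly, which amounts to the same computation.
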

\begin{proof}
Let $\pic{M^{\gamma, y}}(\cdot)$ be the compensator of $M^{\gamma, y}(\cdot)$ (we refer to \cite{HHK06} for the definition). By \eqref{4567}, through computations similar to \eqref{rana}, it is possible to show that 
\begin{align}\label{sads}
\pic{M^{\gamma, y}}(t)=\int_0^t\pare{L_\gamma\pare{y^\gamma\pare{s}}^2-2y^\gamma \pare{s}L_\gamma y^\gamma \pare{s}}ds\leq \pare{\beta+1}\gamma^d t,
\end{align}
for $t\geq 0$. Then by Doob's inequality and Ito's isometry we get
\begin{align}\label{m}
\begin{aligned}
\BB P\pare{\sup_{t\in[0,t_c]}\abs{\int_0^te^{-\pare{\beta-1}s}dM^{\gamma, y}(s)}>\gamma^{\frac{d}{2}-\xi}}&\leq\gamma^{-d+2\xi}\BB{E}_{\BB P}\pare{\int_0^{t_c}e^{-\pare{\beta-1}s}dM^{\gamma, y}(s)}^2\\
&= \gamma^{-d+2\xi}\BB{E}_{\BB P}\pare{\int_0^{t_c}e^{-2\pare{\beta-1}s}d\pic{M^{\gamma, y}}(s)}\\
&\leq C_1\gamma^{2\xi},
\end{aligned}
\end{align}
where $C_1$ is a constant depending on $\beta$. Let $C_2$ be the constant of Lemma \ref{ray} taking $G\equiv 1$, $J\equiv 1$, $T=t_c$ and $\zeta=t_c^{\frac{1}{2}}\gamma^{\frac{d}{2}-\xi}$, we obtain that
\begin{align}\label{pp}
\BB P\pare{\sup_{t\in[0,t_c]}\abs{M^{\gamma,x}(t)}>t_c^{\frac{1}{2}}\gamma^{\frac{d}{2}-\xi}}\leq C_2 \gamma^{2\xi}.
\end{align}
Inequalities \eqref{m} and \eqref{pp} complete the proof.
\end{proof}
For $T>0$, define
\begin{align}
\psi_1\pare{T}:=\frac{\beta}{\beta-1}e^{-\pare{\beta-1}T}-6\beta^3 e^{-2\pare{\beta-1}T},&\quad \psi_2\pare{T}:=\frac{\beta}{\beta-1}e^{-\pare{\beta-1}T}+6\beta^3 e^{-2\pare{\beta-1}T},\\
\psi_3\pare{T}:=e^{-\pare{\beta-1}T}-6\beta^3 e^{-2\pare{\beta-1}T},&\quad
\psi_4\pare{T}:=e^{-\pare{\beta-1}T}+6\beta^3 e^{-2\pare{\beta-1}T},
\end{align}
and call
\begin{align}\label{gammat}
\mathsf{\Gamma}_T:=\corch{1-\psi_2\pare{T}, 1-\psi_1\pare{T}}\times\corch{\psi_3\pare{T}, \psi_4\pare{T}}\cap \BB Q^2.
\end{align}
The following lemma holds.
\begin{lemma}\label{lemma002}
For every $T>0$, let $\Gamma^\gamma_{ T}:=\llav{\omega\in\Omega: \pare{x^\gamma\pare{t_c-T}, y^\gamma\pare{t_c-T}}\in \mathsf{\Gamma}_T}$, then
\begin{align}\label{moniasan}
\lim_{\gamma\to 0}\BB P\pare{\Gamma^\gamma_T}=1.
\end{align}
\end{lemma}
\begin{proof}
Fix $\tilde\xi\in\pare{0,\frac{d}{2}-\alpha}$ and consider the set $ A_{\tilde\xi}^\gamma$. Since Lemma \ref{poiu} holds, to conclude it is enough to show that 
\begin{align}\label{sad}
A^\gamma_{\tilde\xi}\subseteq \Gamma_T^\gamma,
\end{align}
for all $T>0$ and $\gamma$ small enough. Observe that $E^\gamma\pare{t}\geq 0$ then , fixing $\omega\in A^\gamma_{ \tilde\xi}$, we have that, for all $t\in [0,t_c]$,
\begin{align}\label{pwer}
y^\gamma(t)&\leq e^{\pare{\beta-1}t}\pare{\gamma^\alpha+\gamma^{\frac{d}{2}-\tilde\xi}}.
\end{align}
On the other hand
\begin{equation}\label{pwe}
\begin{split}
1-x^\gamma(t)&\leq \gamma^{\alpha}+\beta\int_0^ty^\gamma(s)ds+t_c^{\frac{1}{2}}\gamma^{\frac{d}{2}-\tilde\xi}\\
&\leq\gamma^{\alpha}+\beta\pare{\gamma^{\alpha}+\gamma^{\frac{d}{2}-\tilde\xi}}\int_0^te^{\pare{\beta-1}s}ds+t_c^{\frac{1}{2}}\gamma^{\frac{d}{2}-\tilde\xi}\\
&\leq \gamma^{\alpha}+\frac{\beta}{\beta-1} e^{\pare{\beta-1}t}\pare{\gamma^\alpha+\gamma^{\frac{d}{2}-\tilde\xi}}+t_c^{\frac{1}{2}}\gamma^{\frac{d}{2}-\tilde\xi}.
\end{split}
\end{equation}
For every $T>0$, by \eqref{pwer} and \eqref{pwe} we get the following bounds
\begin{align}\label{princ00}
y^\gamma(t_c-T)&\leq e^{-\pare{\beta-1}T}+err\pare{\gamma},
\end{align}
and
\begin{align}\label{princ0}
x^\gamma(t_c-T)&\geq 1-\frac{\beta}{\beta-1}e^{-\pare{\beta-1}T}+err\pare{\gamma}.
\end{align}
where $err\pare{\gamma}$ is a term vanishing as $\gamma$ goes to $0$. Using that $\gamma^{\frac{d}{2}-\tilde\xi}\leq \gamma^{\alpha}$ in \eqref{pwer} and \eqref{pwe}, we obtain that, for all $t\in[0, t_c]$, 
\begin{align}\label{kate}
\begin{aligned}
E^\gamma(t)&\leq \frac{6\beta^2}{\beta-1}e^{2\pare{\beta-1}t}\gamma^{2\alpha}+2\beta e^{\pare{\beta-1}t}t_c^{\frac{1}{2}}\gamma^{\alpha+\frac{d}{2}-\tilde\xi}.
\end{aligned}
\end{align}
Since in the right hand side of \eqref{zaq7} there is a negative sign before the first integral term, the upper bound \eqref{kate} can be used in \eqref{zaq7} to get the following lower bound
\begin{align}\label{princ2}
\begin{aligned}
y^\gamma(t)&\geq e^{\pare{\beta-1}t}\gamma^\alpha-\int_0^t e^{\pare{\beta-1}(t-s)}\pare{\frac{6\beta^2}{\beta-1}e^{2\pare{\beta-1}s}\gamma^{2\alpha}+ 2\beta e^{\pare{\beta-1}s}t_c^{\frac{1}{2}}\gamma^{\alpha+\frac{d}{2}-\tilde\xi}}ds-e^{\pare{\beta-1}t}\gamma^{\frac{d}{2}-\tilde\xi}\\
&\geq e^{\pare{\beta-1}t}\pare{\gamma^\alpha-\frac{6\beta^2}{\pare{\beta-1}^2}e^{\pare{\beta-1}t}\gamma^{2\alpha}- 2\beta t_c^{\frac{1}{2}}t\gamma^{\alpha+\frac{d}{2}-\tilde\xi}-\gamma^{\frac{d}{2}-\tilde\xi}}
\end{aligned}
\end{align}
and consequently
\begin{align}
y^\gamma\pare{t_c-T}\geq e^{-\pare{\beta-1}T}-\frac{6\beta^2}{\pare{\beta-1}^2}e^{-2\pare{\beta-1}T}+err\pare{\gamma}.
\end{align}
Since it appears a negative sign before the first integral in the right hand side of \eqref{zaq6}, by the lower bound \eqref{princ2} we get the following upper bound
\begin{align}\label{princ32}
\begin{aligned}
x^\gamma(t_c-T)&=1-\gamma^\alpha-\beta\int_0^{t_c-T} y^\gamma(s)ds+\int_0^{t_c-T}E^\gamma(s)ds+M^{\gamma, x}(t_c-T)\\
&\leq 1-\beta\int_0^{t_c-T}e^{\pare{\beta-1}s}\pare{\gamma^\alpha-\frac{6\beta^2}{\pare{\beta-1}^2}e^{\pare{\beta-1}s}\gamma^{2\alpha}- 2\beta t_c^{\frac{1}{2}}s\gamma^{\alpha+\frac{d}{2}-\tilde\xi}-\gamma^{\frac{d}{2}-\tilde\xi}}ds+\\
&+\int_0^{t_c-T}\pare{ \frac{6\beta^2}{\beta-1} e^{2\pare{\beta-1}s}\gamma^{2\alpha}+2\beta e^{\pare{\beta-1}s}t_c^{\frac{1}{2}}\gamma^{\alpha+\frac{d}{2}-\tilde\xi}}ds+err\pare{\gamma}\\
&=1-\frac{\beta}{\beta-1}e^{-\pare{\beta-1}T}+\frac{6\beta^3}{\pare{\beta-1}^3}e^{-2\pare{\beta-1}T}+err\pare{\gamma}.
\end{aligned}
\end{align}
Reminding that $x^\gamma\pare{\cdot}$ and $y^\gamma\pare{\cdot}$ take rational values, by \eqref{princ00}, \eqref{princ0},\eqref{princ2} and \eqref{princ32} we can conclude \eqref{sad}, for every $T>0$ and $\gamma$ small enough. 
\end{proof}
%
%
%
The latter result shows that, at the time $t_c-T$, the process is within the set $\mathsf{\Gamma}_T$, away from the unstable equilibrium point $\pare{1,0}$. This will allow to study what happens after $t_c-T$ using the results of section \eqref{rem1} once we prove that it is possible to approximate our Markov process with its deterministic version (given by the solution of system \eqref{222}).
\vspace{+10pt}

We can now proceed in the proof of \eqref{nahg}. Fix $x^*\in \pare{\hat x_\infty, 1}$, then by Markov property and Lemma \ref{lemma002}, for every $T>0$,
\begin{align}\label{mng}
\BB P\pare{S_{\gamma, x^*}<+\infty}&=\sum_{\pare{a,b}\in \mathsf{\Gamma}_T}\BB P\pare{S_{\gamma, x^*}<+\infty|x^\gamma\pare{t_c-T}=a, y^\gamma\pare{t_c-T}=b}\\
\vspace{-1700pt}
&\blanco{aaaaaaaaaaaaaaisddssssss}\times\BB P\pare{x^\gamma\pare{t_c-T}=a, y^\gamma\pare{t_c-T}=b}+err\pare{\gamma}\\
&=\sum_{\pare{a,b}\in \mathsf{\Gamma}_T}\BB P^\gamma_{a,b}\pare{\tilde S_{\gamma, T, x^*}<+\infty}\BB P\pare{x^\gamma\pare{t_c-T}=a, y^\gamma\pare{t_c-T}=b}+err\pare{\gamma},
\end{align}
where $\tilde S_{\gamma, T, x^*}:=S_{\gamma, x^*}-t_c+T$ and $\BB P^\gamma_{a,b}$ is the probability measure on $\pare{\Omega, \mathfrak{F}}$ defined as
\begin{align}\label{poca}
\BB P_{a,b}^\gamma\pare{A}=\BB P\pare{A|x^\gamma\pare{0}=a, y^\gamma\pare{0}=b},\quad\forall A\in \mathfrak{F}.
\end{align}

In order to conclude \eqref{nahg}, it is enough to show that there exists $T$  such that, for every $\gamma$ small enough,
\begin{align}\label{abb1}
\BB P^\gamma_{a,b}\pare{\tilde S_{\gamma, T, x^*}<+\infty}>1-\gamma^{\frac{d}{2}},\quad \forall \pare{a,b}\in \mathsf{\Gamma}_T.
\end{align}


For every $x_0, y_0, \bar x\in [0,1]$, we define the time
\begin{align}\label{poca2}
S_{x_0, y_0, \bar x}:=\inf\llav{t\geq 0:x_{x_0,y_0}\pare{t}=\bar x},
\end{align}
where $\pare{x_{x_0, y_0}\pare{t}, y_{x_0, y_0}\pare{t}}$ is the solution of system \eqref{222}, starting from $x(0)=x_0$ and $y(0)=y_0$.
$S_{x_0,  y_0, \bar x}$ is well defined if and only if $x_{x_0, y_0}\pare{\infty}<\bar x\leq x_0$, where $\dis{x_{x_0, y_0}\pare{\infty}:=\lim_{t\to +\infty}x_{x_0, y_0}\pare{t}}$.

Fix $T>0$ and $\pare{a,b}\in \mathsf{\Gamma}_T$, by \eqref{222} and \eqref{445} we get that
\begin{align}\label{kdfxx}
S_{a, b, x^*-e^{-T}}&=\int_{x^*-e^{-T}}^a\frac{1}{\beta x y\pare{x}}dx\\
&=\int_{x^*-e^{-T}}^a\frac{1}{\beta x \pare{-x+\frac{1}{\beta}\log x+a+b-\frac{1}{\beta}\log a}}dx\\
&\leq \int_{x^*-e^{-T}}^{1-\psi_1\pare{T}}\frac{1}{\beta x \pare{1-x+\frac{1}{\beta}\log x-\psi_2\pare{T}+\psi_3\pare{T}-\frac{1}{\beta}\log \pare{1-\psi_1\pare{T}}}}dx.
\end{align}
By studying the denominator of the integrand appearing in the right hand side of \eqref{kdfxx}, it is possible to prove that there exists $T_1:=T_1\pare{x^*}>0$, depending on $x^*$, such that, for all $T>T_1$ fixed and for all $x\in \pare{x^*-e^{-T}, 1-\psi_1\pare{T}}$ the following holds
\begin{align}
1-x+\frac{1}{\beta}\log x-\psi_2\pare{T}+\psi_3\pare{T}-\frac{1}{\beta}\log \pare{1-\psi_1\pare{T}}>0
\end{align}
and consequently the right hand side of \eqref{kdfxx}, which we denote by $\mathcal{L}^{\text{max}}_{T, x^*}$, is finite. In other words, for $T>T_1$,
\begin{align}\label{mok2}
S_{a, b, x^*-e^{-T}}\leq \mathcal{L}^{\text{max}}_{T, x^*}<+\infty, \quad\forall \pare{a,b}\in \mathsf{\Gamma}_T.
\end{align}
Inequality \eqref{abb1} follows from \eqref{mok2} once we prove that, for $T>T_1$ fixed and for every $\gamma$ small enough,
\begin{align}\label{lkof}
\BB P^\gamma_{a,b}\pare{\tilde S_{\gamma, T, x^*}\leq S_{a,b,x^*-e^{-T}}}>1-\gamma^{\frac{d}{2}}, \quad\forall \pare{a,b}\in \mathsf\Gamma_T.
\end{align}
Since $x^\gamma\pare{t}$ decreases in $t$, in order to prove \eqref{lkof} it is sufficient to prove that,
\begin{align}\label{ldd5}
\BB P_{a,b}^\gamma\pare{x^\gamma\pare{S_{a,b,x^*-e^{-T}}}\leq [x^*]_\gamma}>1-{\gamma^{\frac{d}{2}}}, \quad\forall \pare{a,b}\in \mathsf\Gamma_T.
\end{align}
For $T>0$ and $\pare{a,b}\in \mathsf{\Gamma}_T$, define
\begin{align}\label{kkz2}
\mathscr A_{a,b,T, x^*}^\gamma=\llav{\sup_{t\in [0, \CAL L^{\text{max}}_{T, x^*}]}\abs{x^\gamma\pare{t}-x_{a,b}\pare{t}}+\abs{y^\gamma\pare{t}-y_{a,b}\pare{t}}\leq C_{T, x^*}\gamma^{\frac{d}{4}}}
\end{align}
where $C_{T, x^*}=e^{\pare{2\beta+1}\CAL L^{\text{max}}_{T, x^*}}\pare{\mathcal{L}^{\text{max}}_{T, x^*}}^{\frac{1}{2}}$.
The proof of \eqref{ldd5} follows by showing that, for every $T>T_1$ fixed,
\begin{itemize}
\item[(1)] $\dis{\BB P^\gamma_{a,b}\pare{\mathscr A_{a,b,T, x^*}^\gamma}>1-\gamma^{\frac{d}{2}},\quad \forall\pare{a,b}\in \mathsf{\Gamma}_T},$\label{bigb}
\item[(2)] for every $\gamma$ small enough,
\begin{align}
\mathscr A_{a,b,T, x^*}^\gamma\subseteq\llav{x^\gamma\pare{S_{a,b, x^*-e^{-T}}}\leq [x^*]_\gamma}, \quad\forall\pare{a,b}\in\mathsf{\Gamma}_T.
\end{align}
\end{itemize}

Following the same strategy used to prove Lemma \ref{ray} it is possible to show that, for $\#\in\llav{x,y}$, $T>0$,
\begin{align}\label{mok3}
\BB P^\gamma_{a,b}\pare{\sup_{t\in \corch{0, \mathcal{L}^{\text{max}}_{T, x^*}}}\abs{M^{\gamma, \#}\pare{t}}>\pare{\mathcal{L}^{\text{max}}_{T, x^*}}^{\frac{1}{2}}\gamma^{\frac{d}{4}}}\leq \gamma^{\frac{d}{2}},\quad \forall\pare{a,b}\in \mathsf\Gamma_T.
\end{align}
Since, for every $T>0$ and $\pare{a,b}\in \mathsf{\Gamma}_T$, we have that
\begin{align}
\abs{x^\gamma\pare{t}-x_{a,b}\pare{t}}&\leq \abs{x^\gamma\pare{0}-a}+\beta\int_0^t\Big({\abs{x^\gamma\pare{s}-x_{a,b}\pare{s}}+\abs{y^\gamma\pare{s}-y_{a,b}\pare{s}}\Big)}ds+\abs{M^{\gamma, x}\pare{t}},\\
\abs{y^\gamma\pare{t}-y_{a,b}\pare{t}}&\leq \abs{y^\gamma\pare{0}-b}+\int_0^t\Big({\beta\abs{x^\gamma\pare{s}-x_{a,b}\pare{s}}+\pare{\beta+1}\abs{y^\gamma\pare{s}-y_{a,b}\pare{s}}\Big)}ds+\abs{M^{\gamma, y}\pare{t}},
\end{align}
by Gr\"{o}nwall's inequality and \eqref{mok3}, it follows that,
\begin{align}\label{mok4}
\BB P^\gamma_{a,b}&\pare{\mathscr A_{a,b,T, x^*}^\gamma}> 1-\gamma^{\frac{d}{2}}, \quad\forall \pare{a,b}\in \mathsf{\Gamma}_T,
\end{align}
and consequently item (1) is proved. To prove item (2) fix $T>T_1$, $\pare{a,b}\in\mathsf{\Gamma}_T$ and $\omega\in \mathscr A_{a,b,T, x^*}^\gamma$. By \eqref{mok2} we have that
\begin{align}
\abs{x^\gamma\pare{S_{a,b, x^*-e^{-T}}}-\pare{x^*-e^{-T}}}\leq C_{T, x^*}\gamma^{\frac{d}{4}}
\end{align}
and consequently
\begin{align}
x^\gamma\pare{S_{a,b, x^*-e^{-T}}}\leq x^*-e^{-T}+C_{T, x^*}\gamma^{\frac{d}{4}},
\end{align}
which is less than $[x^*]_\gamma$ for $\gamma$ smaller than a value $\gamma_1\pare {x^*,T}$ depending on $x^*$ and $T$ and this concludes the proof of item (2).
Observe that, by \eqref{mok2} we also proved that, for $T>T_1$ and $\gamma<\gamma_1$,
\begin{align}\label{mokl}
\BB P^\gamma_{a,b}\pare{\tilde S_{\gamma, T, x^*}<\mathcal{L}^{\text{max}}_{T, x^*}}>1-\gamma^{\frac{d}{2}},\quad \forall \pare{a,b}\in \mathsf{\Gamma}_T.
\end{align}
\eqref{mokl} will be useful in the proof of Theorem \ref{004}.

We proceed now in the proof of \eqref{nahgg}.
Fix $\varepsilon >0$ and observe that, by Markov property and Lemma \ref{lemma002},
\begin{align}\label{luia}
\BB P&\pare{\abs{y^\gamma\pare{S_{\gamma, x^*}}-y^*}>\varepsilon}\\
&\leq \sum_{\pare{a,b}\in \mathsf\Gamma_T}\BB P^\gamma_{a,b}\pare{\abs{y^\gamma\pare{\tilde S_{\gamma, T, x^*}}-y^*}>\varepsilon}\BB P\pare{x^\gamma\pare{t_c-T}=a, y^\gamma\pare{t_c-T}=b}+err\pare{\gamma}
\end{align}
for all $T>0$.
As before, to conclude it is enough to prove that there exists $T>0$, such that, for every $\gamma$ small enough,
\begin{align}\label{luia1}
\BB P^\gamma_{a,b}\pare{\abs{y^\gamma\pare{\tilde S_{\gamma, T, x^*}}-y^*}>\varepsilon}\leq 2{\gamma^{\frac{d}{2}}}.
\end{align}
Fixing $T>T_1$ and $\pare{a,b}\in\mathsf{\Gamma}_T$, observe that 
\begin{align}\label{luia2}
\abs {y^\gamma\pare{\tilde S_{\gamma, T, x^*}}-y^*}\leq \abs {y^\gamma\pare{\tilde S_{\gamma, T, x^*}}-y_{a,b}\pare{\tilde S_{\gamma, T, x^*}}}+\abs {y_{a,b}\pare{\tilde S_{\gamma, T, x^*}}-y^*}.
\end{align}
Fix $\omega\in \mathscr A_{a,b,T, x^*}^\gamma\cap \llav{\tilde S_{\gamma, T, x^*}\leq\CAL L^{\text{max}}_{T, x^*}}$, it follows that
\begin{align}\label{luia3}
\abs{y^\gamma(\tilde S_{\gamma, T, x^*})-y_{a,b}(\tilde S_{\gamma, T, x^*})}\leq C_{T, x^*}\gamma^{\frac{d}{4}}.
\end{align}
On the other hand, by \eqref{445},
\begin{align}
y_{a,b}\pare{\tilde S_{\gamma, T, x^*}}&=-x_{a,b}\pare{\tilde S_{\gamma, T, x^*}}+\frac{1}{\beta}\log x_{a,b}\pare{\tilde S_{\gamma, T, x^*}}+a+b-\frac{1}{\beta}\log a\\
&=y^*+x^*-x_{a,b}\pare{\tilde S_{\gamma, T, x^*}}-\frac{1}{\beta} \log x^*+\frac{1}{\beta}\log x_{a,b}\pare{\tilde S_{\gamma, T, x^*}}+a+b-1-\frac{1}{\beta}\log a .
\end{align}
Since $x_{a,b}\pare{t}\geq \hat{x}_\infty$, for $t\geq 0$, and the logarithm is Lipschitz in the interval $[\hat{x}_\infty, +\infty)$, we have that
\begin{align}\label{luia4}
\abs{y_{a,b}(\tilde S_{\gamma, T, x^*})-y^*}&\leq C\abs{x^*-x_{a,b}\pare{\tilde S_{\gamma, T, x^*}}}+C\pare{T}\\
&\leq C\pare{\abs{x^*-[x^*]_\gamma}+\abs{[x^*]_\gamma-x_{a,b}(\tilde S_{\gamma, T, x^*})}}+C\pare{T}\\
&\leq C\pare{\gamma+C_{T, x^*}\gamma^{\frac{d}{4}}}+C\pare{T},
\end{align}
where $C=1+\frac{1}{\beta\hat{x}_\infty}$ and $C\pare{T}:=\psi_4\pare{T}-\frac{1}{\beta}\log\pare{1-\psi_2\pare{T}}$ is such that $\dis{\lim _{T\to +\infty}C\pare{T}=0}$. Therefore there exists $T_2:=T_2\pare{\varepsilon}$ depending on $\varepsilon$, such that, for every $T>\max\llav{T_1, T_2}$ fixed and for all $\gamma$ smaller than a value  $\gamma_2:=\gamma_2\pare{x^*, T, \varepsilon}$, the right hand sides of \eqref{luia3} and \eqref{luia4} are smaller than $\frac{\varepsilon}{2}$ and $\frac{2\varepsilon}{3}$ respectively. Then
\begin{align}
\mathscr A_{a,b,T, x^*}^\gamma\cap \llav{\tilde S_{\gamma, T, x^*}<\CAL L_{T, x^*}}\subseteq \llav{\abs{y^\gamma\pare{\tilde S_{\gamma, T, x^*}}-y^*}<\varepsilon},
\end{align}
and consequently, by \eqref{mokl} and \eqref{mok4} we get \eqref{luia1}.
\subsection{Proof of Theorem \ref{004}}
By Markov property and Lemma \ref{lemma002}, for every $\CAL L^-, \CAL L^+, T>0$,
\begin{align}\label{poha}
\BB P&\pare{S_{\gamma, x^*}\in [t_c-T+\CAL L^-, t_c-T+\CAL L^+]}\\
&=\sum_{\pare{a,b}\in \mathsf{\Gamma}_T}\BB P^\gamma_{a,b}\pare{\tilde S_{\gamma, T, x^*}\in \corch{\CAL L^-, \CAL L^+}}\BB P\pare{x^\gamma\pare{t_c-T}=a, y^\gamma\pare{t_c-T}=b}+err\pare{\gamma};
\end{align}
we remind that $\tilde S_{\gamma, T, x^*}:=S_{\gamma, x^*}-t_c+T$ and that the definition of $\BB P^\gamma_{a,b}$ has been given in \eqref{poca}.
To prove \eqref{jaja} we will use \eqref{poha} replacing $\CAL L^+$ by $\CAL L_{T, x^*}^{\text{max}}$ (defined by the rhs of \eqref{kdfxx}) and $\CAL L^-$ by $\CAL L_{T, x^*}^{\text{min}}$ which will be defined later. Therefore
it is enough to show that there exists $T_{x^*}>0$ such that, for $T>T_{x^*}$ and for every $\gamma$ small enough,
\begin{align}\label{cri1}
\BB P^\gamma_{a,b}\pare{\tilde S_{\gamma, T, x^*}< \CAL L_{T, x^*}^{\text{max}}}>1-\gamma^{\frac{d}{2}},
\end{align}
and
\begin{align}\label{cri2}
\BB P^\gamma_{a,b}\pare{\tilde S_{\gamma, T, x^*}>\CAL L_{T, x^*}^{\text{min}}}>1-\gamma^{\frac{d}{2}}, 
\end{align}
for all $\pare{a,b}\in \mathsf{\Gamma}_T$.
Inequality \eqref{cri1} has already been showed in the proof of Lemma \ref{balu} for $T>T_1=T_1\pare{x^*}$ and $\gamma$ small enough (see \eqref{mokl}). To prove \eqref{cri2} we proceed in a  similar way. Fix $T>T_1$, $\pare{a,b}\in \mathsf \Gamma_{ T}$ and observe that,
\begin{align}\label{cri3}
S_{a,b, x^*+e^{- T}}\geq \int_{x^*+e^{- T}}^{1-\psi_2\pare{ T}}\frac{dx}{\beta x\pare{1-x+\frac{1}{\beta}\log x-\psi_1\pare{ T}+\psi_4\pare{ T}-\frac{1}{\beta}\log \pare{1-\psi_2\pare{T}}}}
\end{align}
We denote the right hand side of \eqref{cri3} by $\CAL L^{\text{min}}_{T, x^*}$. Inequality \eqref{cri2} follows once we show that, for every $\gamma$ small enough,
\begin{align}\label{cri4}
\BB P^\gamma_{a,b}\pare{\tilde S_{\gamma, T, x^*}\geq S_{a,b, x^*+e^{- T}}}>1-\gamma^{\frac{d}{2}}, \quad \forall \pare{a,b}\in \mathsf{\Gamma}_{ T}.
\end{align}
Since $x^\gamma\pare{t}$ decreases in $t$, \eqref{cri4} follows if we prove that
\begin{align}\label{ldd22}
\BB P_{a,b}^\gamma\pare{[x^*]_\gamma\leq x^\gamma\pare{S_{a,b, x^*+e^{- T}}}}>1-\gamma^{\frac{d}{2}}, \quad\forall\pare{a,b}\in\mathsf{\Gamma}_{ T}.
\end{align}
Observe that for the monotonicity of $x^\gamma\pare{t}$ it follows that $S_{a,b, x^*+e^{- T}}<S_{a,b, x^*-e^{- T}}\leq \CAL L^{\text{max}}_{T, x^*}$. Consequently by fixing $\omega\in \CAL A_{a,b, T, x^*}^\gamma$, whose definition is given in \eqref{kkz2}, we have that
\begin{align}
x^\gamma\pare{S_{a,b, x^*+e^{- T}}}\geq x^*+e^{-T}+C_{ T, x^*}\gamma^{\frac{d}{4}},
\end{align}
which is greater than $[x^*]_\gamma$ for $\gamma$ smaller than a value $\gamma_1:=\gamma_1\pare{x^*, T}$. More precisely, for every $T>T_1$ fixed and for all $\gamma<\gamma_1$, 
\begin{align}
\mathscr A_{a,b, T, x^*}^\gamma\subseteq\llav{[x^*]_\gamma\leq x^\gamma\pare{S_{a,b, x^*+e^{-T}}}}, \quad\forall\pare{a,b}\in\mathsf{\Gamma}_{ T}.
\end{align}
By \eqref{mok4} we conclude \eqref{ldd22}.
To prove \eqref{polka} observe that
\begin{align}\label{nnnj}
\CAL L_{T, x^*}^\text{max}-\CAL L_{T, x^*}^\text{min}=&\int_{x^*-e^{-T}}^{x^*+e^{-T}}\frac{1}{\beta x\pare{1-x+\frac{1}{\beta}\log x+G_1\pare{T}}}dx\\
&+\int_{x^*+e^{-T}}^{1-\psi_2\pare{T}}\corch{\frac{1}{\beta x\pare{1-x+\frac{1}{\beta}\log x+G_1\pare{T}}}-\frac{1}{\beta x\pare{1-x+\frac{1}{\beta}\log x+G_2\pare{T}}}}dx\\
&+\int_{1-\psi_2\pare{T}}^{1-\psi_1\pare{T}}\frac{1}{\beta x\pare{1-x+\frac{1}{\beta}\log x+G_1\pare{T}}}dx,
\end{align}
where 
\begin{align}
G_1\pare{T}=-\psi_2\pare{T}+\psi_3\pare{T}-\frac{1}{\beta}\log\pare{1-\psi_1\pare{T}}, \quad G_2\pare{T}=-\psi_1\pare{T}+\psi_4\pare{T}-\frac{1}{\beta}\log\pare{1-\psi_2\pare{T}}.
\end{align}
The first integral in the rhs of \eqref{nnnj} clearly vanishes as $T\to +\infty$. The second integral is bounded above, definitely in $T$, by
\begin{align}
\pare{1-x^*}\frac{G_2\pare{T}-G_1\pare{T}}{x^*\pare{\frac{1}{\beta}\log x^*}^2+\pare{G_1\pare{T}+G_2\pare{T}}\pare{1-x^*+\frac{1}{\beta}\log\pare{1-\psi_2\pare{T}}}+G_1\pare{T}G_2\pare{T}},
\end{align}
which vanishes as $T\to +\infty$. The third integral appearing in \eqref{nnnj} is bounded above by
\begin{align}
\pare{\psi_2\pare{T}-\psi_1\pare{T}}\frac{1}{\beta\pare{\psi_1\pare{T}+\frac{1}{\beta}\log\pare{1-\psi_2\pare{T}}+\psi_3\pare{T}-\psi_2\pare{T}-\frac{1}{\beta}\log\pare{1-\psi_2\pare{T}}}}
\end{align}
which also vanishes as $T\to +\infty$ and this concludes the proof.

\subsection{Proof of Theorem \ref{051}}	\label{sez4}
\paragraph*{Proof of item (a).}
Let $\pare {x(t), y(t)}$ be the solution of the macroscopic system \eqref{222} starting from $\pare{\rho_0, \rho_1}$, with $\rho_1$ satisfying the hypothesis of item (a). By Corollary \ref{teoo} such solution converges, as $t\to+\infty$, to an equilibrium point $\pare{x_\infty, 0}$,  where $x_\infty<\frac{1}{\beta}$, as  we proved in section \ref{rem1}.  

The proof of Theorem \ref{051} follows if we show it for $\varepsilon$ in the interval $\pare{0,1}$. Fix $\varepsilon\in(0,1)$ and $\delta$ such that $0<\delta<\min\llav{1, 4\pare{\frac{1}{\beta}-x_{\infty}}}$, then there exists a time $T:=T\pare{\varepsilon, \delta}\geq 0$, depending on $\varepsilon$ and $\delta$, at which
\begin{align}\nonumber
y(T)\leq\varepsilon\delta,\qquad x(T)-x_\infty\leq \frac{\varepsilon}{2},\qquad x(T)<\frac{1}{\beta}-\frac{\varepsilon\delta}{4}.
\end{align}
Define
\begin{align}\nonumber
\begin{aligned}
\mathscr{G}_{T, \varepsilon, \delta}:=\corch{x\pare{T}-\frac{\varepsilon\delta}{4},\; x\pare{T}+\frac{\varepsilon\delta}{4}}\times \corch{y\pare{T}-\frac{\varepsilon\delta}{4},\; y\pare{T}+\frac{\varepsilon\delta}{4}}\cap\BB Q^2,
\end{aligned}
\end{align}
by Corollary \ref{teoo},
\begin{align}\label{k1}
\BB P\pare{\pare{x^\gamma\pare{T}, y^\gamma\pare{T}}\in \mathscr{G}_{T, \varepsilon, \delta}}\geq 1-err\pare{\gamma},
\end{align}
where $err\pare{\gamma}$ is a term vanishing as $\gamma$ converges to $0$. 
Observe that
\begin{align}
\abs{x^\gamma\pare{\infty}-x_\infty}&\leq x^\gamma\pare{T}-x^\gamma\pare{\infty}+\abs{x^\gamma\pare{T}-x\pare{T}}+x\pare{T}-x_\infty\\
&\leq x^\gamma\pare{T}-x^\gamma\pare{\infty}+\abs{x^\gamma\pare{T}-x\pare{T}}+\frac{\varepsilon}{2},
\end{align}
and consequently,
\begin{align}\label{asdq}
\BB P&\pare{\abs{x^\gamma\pare{\infty}-x_\infty}>\varepsilon}\leq \BB P\pare{x^\gamma\pare{T}-x^\gamma\pare{\infty}>\frac{\varepsilon}{4}}+\BB P\pare{\abs{x^\gamma\pare{T}-x\pare{T}}>\frac{\varepsilon}{4}}\\
&\leq \sum_{\pare{x,y}\in\mathscr G_{T, \varepsilon, \delta}}\BB P\pare{x^\gamma\pare{T}-x^\gamma\pare{\infty}>\frac{\varepsilon}{4}\Big| x^\gamma\pare{T}=x, y^\gamma\pare{T}=y}\BB P\pare{x^\gamma\pare{T}=x, y^\gamma\pare{T}=y}\\
&\blanco{aaaaaaaaaaaaaa}+\BB P\pare{\abs{x^\gamma\pare{T}-x\pare{T}}>\frac{\varepsilon}{4}\Big|\mathscr{G}_{T, \varepsilon, \delta}}\BB P\pare{\mathscr{G}_{T, \varepsilon, \delta}}+err\pare{\gamma}\\
&\leq \sum_{\pare{x,y}\in\mathscr G_{T, \varepsilon, \delta}}\BB P\pare{x^\gamma\pare{T}-x^\gamma\pare{\infty}>\frac{\varepsilon}{4}\Big| x^\gamma\pare{T}=x, y^\gamma\pare{T}=y}\BB P\pare{x^\gamma\pare{T}=x, y^\gamma\pare{T}=y}+err\pare{\gamma}\\
&=\sum_{\pare{x,y}\in\mathscr G_{T, \varepsilon, \delta}}\BB P^\gamma_{x,y}\pare{x^\gamma\pare{0}-x^\gamma\pare{\infty}>\frac{\varepsilon}{4}}\BB P\pare{x^\gamma\pare{T}=x, y^\gamma\pare{T}=y}+err\pare{\gamma}.
\end{align}
In the last step of \eqref{asdq} we used the Markov property. We also recall that $\BB P_{x, y}^\gamma$ is a probability measure defined on $\pare{\Omega, \mathfrak{F}}$ as
\begin{align}
\BB P_{x,y}^\gamma\pare{A}=\BB P\pare{A|x^\gamma\pare{0}=x, y^\gamma\pare{0}=y},\quad\forall A\in \mathfrak{F}.
\end{align}

%

Consider now the process $\pare{x^\gamma\pare{t}, y^\gamma\pare{t}}_{t\geq 0}$,  starting from $x^\gamma\pare{0}=x$, $y^\gamma\pare{0}=y$ and call $Y_1^\gamma\pare{t}:=\gamma^{-d}y^\gamma\pare{t}$. 

Observe that $\pare{Y_1^\gamma\pare{t}}_{t\geq 0}$ coincides with a Markov birth and death process, starting from $\gamma^{-d}y$ individuals, with random birth rate varying in time $\llav{\lambda_1\pare{t}}_{t\geq 0}:=\llav{\beta x^{\gamma}\pare{t}}_{t\geq 0}$ and constant death rate $\mu_1:= 1$. Let $N_1^\gamma\pare{0, +\infty}$ be the number of individuals generated by such process in the interval $\pare{0, +\infty}$, it follows that $x^\gamma(0)-x^\gamma(\infty)= \gamma^d N_1^\gamma\pare{0, +\infty}$.

Call ${N_2^\gamma}(0, +\infty)$ the number of individuals,  generated in $(0, +\infty)$, in a classic birth and death process $\pare{Y_2^\gamma\pare{t}}_{t\geq 0}$, having $\corch{\gamma^{-d}\pare{y(T)+\frac{\varepsilon\delta}{4}}}+1$\footnote{$[a]$ denotes the integer part of $a$.} individuals at time $0$, with birth rate $\lambda_2:=\beta\pare{x(T)+\frac{\varepsilon\delta}{4}}$ and death rate $\mu_2:= 1$.

If $y\leq y\pare{T}+\frac{\varepsilon\delta}{4}$, $\lambda_1\pare{t}\leq \lambda_2$ for all $t\geq 0$, and $\mu_1=\mu_2$, then  $N_1^\gamma\pare{0, +\infty}$ can be coupled with ${N_2^\gamma}(0, +\infty)$ in a common probability space $\pare{\tilde\Omega, \tilde{\mathscr{F}}, \tilde{\BB P}}$ in order to prove that
\begin{align}\label{sure}
N_1^\gamma\pare{0, +\infty}< N_2^\gamma(0,+\infty)
\end{align}
$\tilde {\BB P}$-almost surely.
Indeed, for $i\in\llav{1, \ldots, \gamma^{-d}y}$, call $s_i$ the $i$-th element present at time $0$ in the process $Y_1^\gamma\pare{\cdot}$, while for  $j\in\llav{1, \ldots, Y_2^\gamma\pare{0}}$, call $r_j$, the $j$-th element present at time $0$ in the process  $Y_2^\gamma\pare{\cdot}$. 
Observe that $Y_1^\gamma\pare{0}\leq Y_2^\gamma\pare{0}$ and call $\tau_i$ the stopping time at which $r_i$ dies; $\tau_i$ has an exponential distribution of parameter $1$. It is possible to construct the process $\llav{A_{1,i}^\gamma\pare{t}}_t$  counting the number of individuals generated by $s_i$, from $0$ up to the time at which $s_i$ dies, by thinning the process $\llav{A_{2, i}^\gamma\pare{t}}_{t\in [0, \tau_i)}$, which counts the number of individual generated by $r_i$ in the interval $(0, \tau_i)$ and with Poisson distribution of parameter $\lambda_2$. 
The number of points of $A_{2,i}^\gamma$ always dominate the number of points of the process $A_{1, i}^\gamma$ with smaller rate, then, \eqref{sure} follows.

Thus, denoting by $\BB E_{\tilde{\BB P}}$ the expectation respect to the probability measure $\tilde{\BB P}$, by \eqref{sure} we can conclude that, for every $\pare{x,y}\in \mathscr {G}_{T, \varepsilon, \delta}$ such that $\BB P\pare{x^\gamma(T)=x, y^\gamma(T)=y}\neq 0$,\footnote{We recall that $x^\gamma\pare{T}$ and $y^\gamma\pare{T}$ can only assume values which are multiples of $\gamma^d$ and consequently the process $Y_1^\gamma\pare{\cdot}$ always starts at time $0$ with an integer number of individuals.}
\begin{align}
\BB P^\gamma_{x,y}\pare{x^\gamma(0)-x^\gamma(\infty)>\frac{\varepsilon}{4}}&\leq\tilde{\BB P}\pare{\gamma^d N_2^\gamma(0,+\infty)>\frac{\varepsilon}{4}}\\
&\leq \frac{4\gamma^d}{\varepsilon}\mathbb{E}_{\tilde{\BB P}}\pare{N_2^\gamma(0,+\infty)}\\
&=\frac{4\gamma^d}{\varepsilon}\frac{[\gamma^{-d}\pare{y(T)+\frac{\varepsilon\delta}{4}}]+1}{1-\beta\pare{x(T)+\frac{\varepsilon\delta}{4}}}\label{ps1}\\
&\leq\frac{5\delta}{1-\beta\pare{1+\frac{\varepsilon\delta}{4}}}+err\pare{\gamma}.\label{lkh}
\end{align}
In \eqref{ps1} we used that $\BB E_{\tilde{\BB P}}\pare{N_2^\gamma(0,+\infty)}={Y}_2^\gamma(0)\frac{\mu_2}{\mu_2-\lambda_2}$ as $\lambda_2<\mu_2$ under our choise of $T$, $\delta$ and $\varepsilon$ (see \cite{K48} for more details). Inequality \eqref{lkh} follows from the fact that $x\pare{T}\leq 1$ and the assumption $y\pare{T}\leq \varepsilon\delta$. 
By \eqref{asdq} and \eqref{lkh}, we get that
\begin{align}
\BB P\pare{\abs{x^\gamma\pare{\infty}-x_\infty}>\varepsilon}\leq \frac{5\delta}{1-\beta\pare{1+\frac{\varepsilon\delta}{4}}}+err\pare{\gamma},
\end{align}
which vanishes after taking $\gamma$ and $\delta$ to $0$. This concludes the proof of (a).

\paragraph*{Proof of item (b).}
Also in this case it will be enough to make the proof for $\varepsilon$ in the interval $\pare{0,1}$.
Fix $\varepsilon\in\pare{0,1}$, and $\delta$ such that $0< \delta<\min\llav{2\pare{\frac{1}{\beta}-\hat{x}_\infty}, 1}$, and take $x^*:=x^*\pare{\varepsilon, \delta}\in \pare{\hat x_{\beta},1}$ such that $x^*-\hat{x}_\infty<\frac{\varepsilon\delta}{3}$. Since $1-\hat x_\infty+\frac{1}{\beta}\log \hat x_\infty=0$ and the logarithm is Lipschitz in the interval $[\hat x_\infty, +\infty)$, it follows that
\begin{align}\label{ventisette}
y^*:=y^*\pare{x^*}&=1-x^*+\frac{1}{\beta}\log x^*\\
&\leq x^*-\hat x_\infty+\frac{1}{\beta}\pare{\log x^*-\log \hat x_\infty}\\
&\leq\pare{1+\frac{1}{\beta \hat{x}_\infty}}\varepsilon \delta.
\end{align}.

Despite $x^*$ and $y^*$ depend on $\varepsilon$ and $\delta$, from now on we will omit writing such dependence just to make the notation easier. We have that
\begin{align}\label{marc3}
\abs{x^\gamma\pare{\infty}-\hat{x}_\infty}&\leq \abs{x^\gamma\pare{\infty}-[x^*]_\gamma}+\abs{[x^*]_\gamma-\hat{x}_\infty}\\
&\leq \abs{x^\gamma\pare{\infty}-[x^*]_\gamma}+\gamma+\frac{\varepsilon}{3}\\
&\leq \abs{x^\gamma\pare{\infty}-[x^*]_\gamma}+\frac{\varepsilon}{2},
\end{align}
for $\gamma$ small enough. Then to conclude it is enough to prove that
\begin{align}\label{ds5}
\lim_{\gamma\to 0}\BB P\pare{\abs{x^\gamma\pare{\infty}-[x^*]_\gamma}>\frac{\varepsilon}{2}}=0.
\end{align} 
For every $y\in \corch{y^*-\varepsilon\delta, y^*+\varepsilon \delta}$, define
\begin{align}
\mathscr{S}^\gamma_{x^*,y}=\llav{S_{\gamma, x^*}<+\infty, y^\gamma\pare{S_{\gamma, x^*}}=y},
\end{align}
and observe that, by the Markov property and Theorem \ref{balu},
\begin{align}\label{pokl}
\BB P\pare{\abs{x^\gamma\pare{\infty}-[x^*]_\gamma}>\frac{\varepsilon}{2}}&=\sum_{y\in [y^*-\varepsilon\delta, y^*+\varepsilon\delta]\cap \BB Q}\BB P\pare{\abs{x^\gamma\pare{\infty}-[x^*]_\gamma}>\frac{\varepsilon}{2}\Big|\mathscr{S}^\gamma_{x^*,y}}\BB P\pare{\mathscr{S}^\gamma_{x^*,y}}\\
&\blanco{aaaaaaaaaaaaaaaaaaaaaaaaaaaaaaaaaaaaaaaaa}+err\pare{\gamma}\\
&=\sum_{y\in [y^*-\varepsilon\delta, y^*+\varepsilon\delta]\cap \BB Q}\BB P^\gamma_{[x^*]_\gamma, y}\pare{\abs{x^\gamma\pare{\infty}-[x^*]_\gamma}>\frac{\varepsilon}{2}}\BB P\pare{\mathscr{S}^\gamma_{x^*,y}}+err\pare{\gamma}.
\end{align}
Since \eqref{ventisette} holds, for every $y\in \corch{y^*-\varepsilon\delta, y^*+\varepsilon\delta}$ we have that $y\leq\pare{2+\frac{1}{\beta\hat x_\infty}}\varepsilon\delta$, while $\dis{[x^*]_\gamma<x^*+\gamma<\hat x_\infty+\frac{\varepsilon\delta}{2}}$ for every $\gamma<\gamma_1\pare{\varepsilon,\delta}:=\frac{\varepsilon\delta}{6}$.

Consequently, following the same strategy just used to prove the previous item, i.e. by coupling the system with a birth and death process with birth rate $\beta\pare{\hat{x}_\infty+\frac{\varepsilon\delta}{2}}$ and death rate $1$ and starting from $\gamma^{-1}\pare{2+\frac{1}{\beta\hat{x}_\infty}}\varepsilon\delta$ individuals, it is possible to show that, 
\begin{align}
\BB P^\gamma_{[x^*]_\gamma, y}\pare{\abs{x^\gamma\pare{\infty}-[x^*]_\gamma}>\frac{\varepsilon}{2}}\leq \frac{\pare{4+\frac{2}{\beta\hat{x}_\infty}}}{1-\beta\hat{x}_\infty}\delta, 
\end{align}
for all $\gamma<\gamma_1$. By \eqref{pokl} and Theorem \ref{balu} we get that, for all $\gamma<\gamma_1$,
\begin{align}\label{bhy}
\BB P\pare{\abs{x^\gamma\pare{\infty}-[x^*]_\gamma}>\frac{\varepsilon}{2}}\leq C\delta+err\pare{\gamma},
\end{align}
where $C=\dis{\frac{\pare{4+\frac{2}{\beta\hat{x}_\infty}}}{1-\beta\hat{x}_\infty}}$.  The left hand side of \eqref{bhy} is independent of $\delta$, \eqref{ds5} follows by taking the limit of $\gamma$ and $\delta$ to $0$ in both sides of \eqref{bhy}.

\section{Appendix}
\paragraph*{Topology of the space}
Let $\CAL M_1^+:=\CAL M_1^+\pare{\BB T^d}$ be the space of all positive measures on $\BB T^d$ with mass bounded by $1$, endowed with the weak topology. According to Chapter 4 of \cite{KL} we can define a metric on $\mathcal{M}_1^+$ by introducing a dense countable family $\{f_k\}_{k\geq1}$ of continuous functions on $\mathbb{T}^d$ as follows
\begin{align}
d_{\CAL M_1^+}(\mu,\nu)=\sum_{k=1}^{\infty}\frac{1}{2^k}\frac{\big|\pic{\mu,f_k}-\pic{\nu,f_k}\big|}{1+\big|\pic{\mu,f_k}-\pic{\nu,f_k}\big|}\quad \forall \nu,\mu\in\mathcal{M}_1^+.
\end{align}
 The metric space $\pare{\CAL M_1^+, d_{\CAL{M}_1^+}}$ is Polish (see \cite{Bo} for istance). Fixing $T>0$  we denote by $\mathcal{D} := D\pare{[0,T], \mathcal{M}_1^+}$, the space of right continuous functions with left limits taking values in $\mathcal M_1^+$. We endow $\CAL D$ with the modified Skorohod metric $d_{\text{SK}}$, defined as
\begin{align}
d_{\text{SK}}\pare{\Pi,\tilde\Pi}=\inf_{\lambda\in\Lambda}\max\llav{\|\lambda\|,\sup_{t\in [0,T]}d_{\CAL M_1^+}\pare{\pi_{\lambda \pare{t}},\pi_t}},
\end{align}
for every $\Pi=\pare{\pi_t}_{t\in[0,T]}, \tilde\Pi=(\tilde \pi_t)_{t\in[0,T]}\in\CAL D$ , where $\Lambda$ is the set of strictly increasing continuous functions $\lambda:[0,T]\to [0,T]$ such that $\lambda\pare{0}=0$ and $\lambda\pare{T}=T$, while
\begin{align}\nonumber
\|\lambda\|:=\sup_{s\neq t}\abs{\log\frac{\lambda(t)-\lambda(s)}{t-s}}.
\end{align}
By Proposition 4.1.1 of \cite{KL} the metric space $\pare{\CAL D, d_{\text{SK}}}$  is Polish. $\CAL D^2$ is endowed with the product topology and the distance 
\begin{align}
d\pare{\underline\Pi, \underline{\tilde\Pi}}=d_{\text{SK}}\pare{\Pi^0, \tilde\Pi^0}+d_{\text{SK}}\pare{\Pi^1, \tilde\Pi^1},
\end{align}
for every $\underline\Pi=\pare{\Pi^0, \Pi^1}, \underline{\tilde\Pi}=\pare{\tilde\Pi^0, \tilde\Pi^1}\in\CAL D^2$.

\bigskip
The following theorem guarantees the tightness of the sequence $P^\gamma$ defined in Section \ref{sez2}.
\begin{theorem}\label{tightness}
The sequence $P^\gamma$ is tight.
\end{theorem}
\begin{proof}
The tightness of the sequence $P^\gamma$ follows once we prove the tightness of the sequences of the correspondent marginals $P^{\gamma,0}$ and $P^{\gamma,1}$. We only prove it for $P^{\gamma, 0}$ as the other case is analogous. According to chapter 4 of \cite{KL}, to conclude, it is sufficient to show that the following conditions hold
\begin{enumerate}
\item $\forall t\in [0,T]$ and $\forall \epsilon>0$ there exist a compact set $K(t,\epsilon )\subset \CAL M_1^+$ such that $$\sup_{\gamma} P^{\gamma, 0}\pare{\Pi:\pi_t\not\in K\pare{t,\epsilon}}\leq \epsilon,$$
\item $\forall \epsilon>0$ $$\lim_{\zeta\to 0}\limsup_{\gamma\to 0} \sup_{\tau\in I_T\atop\theta\leq\zeta}  P^{\gamma,0}\pare{\Pi: d_{\CAL M_1^+}\pare{\pi_{\tau+\theta}, \pi_\tau}>\epsilon}=0,$$
where $I_T$ is the family of all stopping times bounded by $T$.
\end{enumerate}
The first condition is trivially satisfied because $\CAL M_1^+$ is compact (see \cite{Bo}). To prove the second condition, fix $\tau\in I_T$, $\varepsilon>0$ and observe that
\begin{align}
 P^{\gamma,0}\pare{\Pi: d_{\CAL M_1^+}\pare{\pi_{\tau+\theta}, \pi_\tau}>\epsilon}=P\pare{\omega:d_{\CAL M_1^+}\pare{\pi_{\tau+\theta}^{\gamma, 0}, \pi_\tau^{\gamma, 0}}>\varepsilon}.
\end{align}
Choosing $N\in\BB N$ such that $\ds{\sum_{k=N+1}^{+\infty}\frac{1}{2^k}<\frac{\varepsilon}{2}}$ we have that
\begin{align}\label{cvb}
d_{\CAL M_1^+}\pare{\pi_{\tau+\theta}^{\gamma, 0}, \pi_\tau^{\gamma, 0}}\leq\sum_{k=1}^N\abs{\pic{\pi_{\tau+\theta}^{\gamma, 0}, f_k}-\pic{\pi_\tau^{\gamma, 0}, f_k}}+\frac{\varepsilon}{2}.
\end{align}
For every $k\in \BB N$,  by \eqref{5} we get
\begin{align}\label{buongiorno}
\begin{aligned}
\abs{\pic{\pi_{\tau+\theta}^{\gamma, 0}, f_k}-\pic{\pi_\tau^{\gamma, 0}, f_k}}&\leq\abs{\int_\tau^{\tau+\theta}\beta\gamma^d \sum_{x\in \BB T_\gamma^d}\mathbb{I}_{\{\eta^\gamma_s(x)=0\}}\gamma^d\sum_{y\in \BB T_\gamma^d}\mathbb{I}_{\{\eta^\gamma_s(y)=1\}}J\pare{\gamma x, \gamma y}f_k(\gamma x)ds}\\
&+\abs{M_{\tau+\theta}^{\gamma, 0,f_k}-M_{\tau}^{\gamma, 0,f_k}}\\
&\leq C_k\theta+\abs{M_{\tau+\theta}^{\gamma, 0,f_k}-M_{\tau}^{\gamma, 0,f_k}},
\end{aligned}
\end{align}
where $C_k=\beta\|J\|_\infty \|f_k\|_\infty$. By \eqref{4567} and \eqref{rana} we get that, for every $\tau\in I_T$,
\begin{align}\label{dart}
\begin{aligned}
\BB E_{ P}&\pare{M_{\tau+\theta}^{\gamma, 0,f_k}-M_\tau^{\gamma, 0,f_k}}^2\\
&=\mathbb{E}_{P}\pare{\int_\tau^{\tau+\theta}\beta\gamma^{3d}\sum_{x\in \BB T_\gamma^d}\sum_{y\in \BB T_\gamma^d}J\pare{\gamma x, \gamma y}\mathbb{I}_{\llav{\eta^\gamma_s(y)=1}}\mathbb{I}_{\llav{\eta^\gamma_s(x)=0}}f_k\pare{\gamma x}^2ds}\\
&\leq C_k\gamma^d\theta.
\end{aligned}
\end{align}
Let 
\begin{align}
\mathscr A_{\gamma, N}=\llav{\omega\in\Omega : \abs{M_{\tau+\theta}^{\gamma, 0,f_k}-M_\tau^{\gamma, 0,f_k}}\leq\gamma^\frac{d}{4}, \;\forall k\in\llav{1, \ldots, N}},
\end{align}
by $\eqref{dart}$, using Chebyshev inequality, we can prove that $P\pare{\mathscr A_{\gamma, N}}\to 1$ as $\gamma$ vanishes. To conclude condition $2$, it is enough to show that $\mathscr A_{\gamma, N}\subset \llav{d_{\CAL M_1^+}\pare{\pi_{\tau+\theta}^{\gamma, 0}, \pi_\tau^{\gamma, 0}}\leq \varepsilon}$ for $\gamma$ and $\theta$ sufficiently small. Set $\omega\in \mathscr A_{\gamma, N}$, by \eqref{cvb} and \eqref{buongiorno} we obtain that 
\begin{align}
d_{\CAL M_1^+}\pare{\pi_{\tau+\theta}^{\gamma, 0}, \pi_\tau^{\gamma, 0}}\leq N\max_{k\in\llav{1\ldots N}}\llav{C_k}\theta+N\gamma^\frac{d}{4}+\frac{\varepsilon}{2},
\end{align}
which is less than $\varepsilon$ for $\gamma$ and $\theta$ sufficiently small.
\end{proof}

\bigskip

{\bf Acknowledgments.}
I am deeply indebted to Inés Armend\'ariz, Anna De Masi, Pablo Ferrari, Ida Germana Minelli, Errico Presutti, and Maria Eul\'alia Vares for many helpful discussions.
\bibliographystyle{amsalpha}
\bibliography{biblio}

\end{document}